\documentclass{amsart}[11pt]

\usepackage{etex}
\usepackage[lmargin=1in,rmargin=1in,tmargin=1in,bmargin=1in]{geometry}
\usepackage{amsmath,amsthm,amsfonts,amssymb,verbatim}
\usepackage{graphicx}
\usepackage{tabularx}
\usepackage{overpic}
\usepackage{tikz}\usetikzlibrary{decorations.markings}
\usetikzlibrary{arrows}
\usepackage{rotating}
\usepackage{hyperref}
\usepackage{framed}
\usepackage{xcolor}
\usepackage{placeins}
\usepackage{booktabs}
\usepackage{pinlabel}
 \usepackage{picins}
\usepackage{color}
\usepackage{multirow}
\usepackage{overpic}
\usepackage{colortbl}
\usepackage{geometry}
\usepackage{slashed}
\usepackage[font=footnotesize,labelfont=bf]{caption}
\usepackage{arydshln}
\usepackage{blkarray}

\usepackage{wrapfig}

\DeclareMathAlphabet{\mathpzc}{OT1}{pzc}{m}{it}

\definecolor{darkBlue}{RGB}{85,123,151}
\definecolor{lightBlue}{RGB}{114,178,223}
\definecolor{lightPeach}{RGB}{245,178,157}
\definecolor{darkPeach}{RGB}{226,103,67}
\definecolor{darkPeach}{RGB}{226,103,67}
\definecolor{lightGreen}{RGB}{82,157,54}
\definecolor{darkGreen}{RGB}{20,105,0}
\definecolor{lightBrown}{RGB}{190,166,159}

\newcommand{\me}{\textsc{m}}

\setlength{\captionmargin}{0pt}

 \setlength{\oddsidemargin}{0.25in} 
\setlength{\evensidemargin}{0.25in}

\setlength{\textwidth}{6in}
 
 \setlength{\parindent}{0pt}
 \setlength{\parskip}{10pt}

\def\co{\colon\thinspace}

\newcommand{\floor}[1]{\left\lfloor #1 \right\rfloor}
\newcommand{\ceiling}[1]{\left\lceil #1 \right\rceil}

\newcommand{\pt}{\text{pt}}

\newcommand{\Z}{\mathbb{Z}}

\newcommand{\R}{\mathbb{R}}
\newcommand{\F}{\mathbb{F}}

\newcommand{\sI}{\mathcal{I}}

\newcommand{\sP}{\mathcal{P}}

\newcommand{\cylinder}{\overline{T}}
\newcommand{\pcylinder}{\cylinder_\bullet}
\newcommand{\torus}{T}
\newcommand{\ptorus}{\torus_\bullet}
\newcommand{\plane}{\widetilde{T}}
\newcommand{\pplane}{\plane_\bullet}
\newcommand{\curveset}{\boldsymbol{\gamma}}



\newcommand{\HFhat}{\widehat{\mathit{HF}}}

\newcommand{\CFKminus}{\mathit{CFK}^-}

\newcommand{\HFK}{\widehat{\mathit{HFK}}}

\newcommand{\Alg}{\mathcal{A}}





\newtheorem{theorem}{Theorem}

\newtheorem{corollary}[theorem]{Corollary}
\newtheorem{proposition}[theorem]{Proposition}

\newtheorem*{namedtheorem}{\theoremname}
\newcommand{\theoremname}{testing}

\theoremstyle{definition}

\newtheorem{remark}[theorem]{Remark}

\title[Cabling in terms of immersed curves]{Cabling in terms of immersed curves}
\date{\today}

\author[Jonathan Hanselman]{Jonathan Hanselman}
\address {Department of Mathematics, Princeton University.\newline \it{E-mail address:} \tt{jh66@math.princeton.edu}}

\author[Liam Watson]{Liam Watson}
\thanks{JH was partially supported by NSF grant DMS-1812527; LW was partially supported by an NSERC discovery/accelerator grant}
\address {Department of Mathematics, University of British Columbia.\newline \it{E-mail address:} \tt{liam@math.ubc.ca}}

\begin{document}
\maketitle

\begin{abstract} 
In joint work with J. Rasmussen, we gave an interpretation of Heegaard Floer homology for manifolds with torus boundary in terms of immersed curves in a punctured torus \cite{HRW}. In particular, knot Floer homology is captured by this invariant; see \cite{HRW-companion}. Appealing to earlier work of the authors on bordered Floer homology \cite{HW}, we give a formula for the behaviour of these immersed curves under cabling. 
\end{abstract}



Knot Floer homology, as introduced by Ozsv\'ath and Szab\'o \cite{OSz2004-knot} and Rasmussen \cite{Rasmussen-thesis}, provides a categorification: Given a knot $K$ in the three-sphere this invariant is a bigraded vector space $\bigoplus_{a,m\in\Z}\HFK_m(K,a)$ with the property that $\sum_{a,m}(-1)^m\dim\big(\HFK_m(K,a)\big)t^a$ recovers the (symmetrized) Alexander polynomial. This polynomial knot invariant satisfies natural properties associated with operations on knots, for instance, it is well behaved under cabling. Understanding how this particular property manifests at the categorified level drove some of the early calculations of knot Floer homology; see in particular work of Hedden \cite{Hedden-thesis,Hedden2005, Hedden2009}. 

\labellist \tiny
\pinlabel $(2,-1)$ at 19 8, \pinlabel $(2,1)$ at 124 8
\endlabellist
\parpic[r]{
 \begin{minipage}{55mm}
 \centering
 \includegraphics[scale=.75]{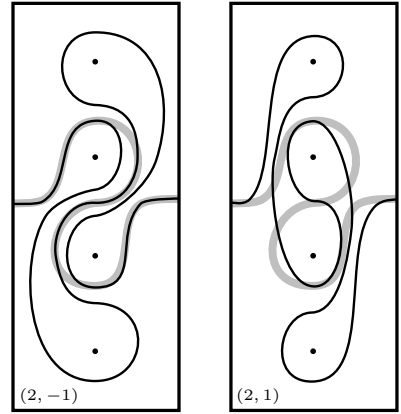}
 \captionof{figure}{The Heegaard Floer homology for the $(2,-1)$ and $(2,1)$ cables  of the right-hand trefoil; the invariant for the trefoil complement is shown in grey.}
 \label{fig:intro-cables}
  \end{minipage}%
  }
Bordered Floer homology provides an essential tool for studying decompositions of three-manifolds along essential tori \cite{LOT}. A framework of bimodules, of relevance to satellite operations, is laid out in \cite{LOT-bimodules}. The work of Levine \cite{Levine2012}, Hom \cite{Hom2014}, and Petkova \cite{Petkova2013}, for example, puts this to use in an essential way. In the setting of manifolds with a single toroidal boundary component, the relevant bordered invariants have been recast in terms of immersed curves in the once-punctured torus \cite{HRW,HRW-companion}. For the purpose of this note, the examples of interest will be provided by the complement of a knot in the three-sphere: Our aim is to establish formulae for how these invariants behave under cabling. Namely, for a knot $K$ in $S^3$ let $K_{p,q}$ denote the $(p,q)$-cable of $K$, and denote the respective knot complements by $M = S^3\setminus \nu(K)$ and 
$M_{p,q} = S^3\setminus \nu(K_{p,q})$; given the immersed multicurve $\HFhat(M)$ we wish to describe $\HFhat(M_{p,q})$ explicitly. For example, immersed curves for two cables of the right-hand trefoil are illustrated in Figure \ref{fig:intro-cables}; the expert reader already familiar with the passage from $\HFK(K)$ to $\HFK(K_{p,q})$ should compare these pictures with the detailed calculations in \cite{Hedden-thesis} or in \cite{OSS2017}. Our calculation makes an explicit appeal to a bordered trimodule calculated by the first author \cite{Hanselman2016}, which was reinterpreted combinatorially in work of the authors predating the immersed curves invariant \cite{HW}. Indeed, central to this article is the work of  translating our merge operation (described in terms of loop calculus) into the language of immersed curves (Section \ref{sec:curves}); cabling is then seen as a special case of the merge operation (Section \ref{sec:proof}).

Recall from \cite{HRW, HRW-companion} that for a (connected, orientable) three-manifold $M$ with torus boundary, the invariant $\HFhat(M)$ takes the form of a collection of immersed curves, possibly decorated with local systems, in the punctured torus $T_\bullet = \partial M \setminus z$, where $z$ is some fixed base point in $\partial M$. If we choose a pair of parametrizing curves $(\alpha, \beta)$ on $\partial M$ then $T_\bullet$ can be identified with the square $[0,1]\times[0,1]$ with opposite sides identified such that $\alpha$ runs in the positive vertical direction, $\beta$ runs in the positive horizontal direction, and the puncture $z$ identified with $(0,0)$. For a knot complement, there is a preferred choice of parametrizing curves is $(\mu, \lambda)$ where $\mu$ is the meridian and $\lambda$ is the Seifert longitude. The invariant $\HFhat(M)$ comes equipped with grading data which, among other things, specifies a lift of these curves to the punctured cylinder $\pcylinder = (\R^2\setminus \Z^2) / \langle \lambda \rangle$; in the standard framing, this can be identified with $(\R/\Z)\times \R$ with punctures at each lattice point $(0,n)$. (Note that this is the point of view taken in the presentation of the invariants in Figure \ref{fig:intro-cables}: in each rectangle, the sides are identified to form a cylinder.) Thus as a graded object it makes sense to view $\HFhat(M)$ as a collection of closed immersed curves $\curveset = (\gamma_0, \ldots, \gamma_n)$ in $\pcylinder$, defined up to homotopy in $\pcylinder$, possibly decorated with local systems. For knots in $S^3$, these curves have the property that, possibly after a homotopy, the curve set intersects the vertical line $\{1/2\}\times \R$ exactly once; we will always assume that $\gamma_0$ is the curve component containing this intersection. In other words, $\gamma_0$ wraps around the cylinder exactly once, while the remaining $\gamma_{i>0}$ can be confined to a neighborhood of the vertical line through the punctures. We remark also that, while $\curveset$ may carry non-trivial local systems, $\gamma_0$ always carries the trivial one-dimensional local system (otherwise the rank of $\HFhat$ of the meridional filling of $K$ would be greater than one). Finally, it is sometimes convenient to work in the plane $\pplane = \R^2 \setminus \Z^2$ rather than the cylinder, with the multicurve $\curveset$ lifting to one that is invariant under translation by $\lambda$; note that in this cover $\gamma_0$ lifts to a single periodic curve while each $\gamma_{i>0}$ lifts to infinitely many copies of the same curve.

We will show that the $(p,q)$-cable operation acts on $\HFhat(M)$ by applying a particular diffeomorphism to the plane. Let $\boldsymbol g_{p,q}$ be a diffeomorphism of $\R^2$ defined on the lattice $\Z^2$ by sliding each lattice point leftward along lines of slope $\tfrac q p$ until they first meet a vertical line $x = np$ for some integer $n$. Note that $\boldsymbol g_{p,q}$ does not fix the lattice $\Z^2$ but rather takes it to $p\Z \times \tfrac 1 p\Z$; let $\boldsymbol f_{p,q}$ be the composition of this map with vertical stretching by a factor of $p$ and horizontal compression by a factor of $p$, so that $\boldsymbol f_{p,q}$ takes $\Z^2$ to $\Z^2$, followed by a vertical shift of $\tfrac{ (p-1)(q-1) }{2}$. We remark that the vertical shift is forced by the symmetry of the curves $\HFhat(M)$ for any $M$ and our convention that these curves are centered at height $\tfrac 1 2$; with this convention understood we will generally ignore the vertical positioning of the curves, but it is sometimes helpful to keep track of this vertical translation explicitly. The map $\boldsymbol f_{p,q}$ is not linear, though in some sense it is as close to being linear as possible: it is the composition of linear transformations, which can each be realized as a sequence of plane shears, with a single \emph{fractional plane shear} (defined in Section \ref{sec:proof}). 
\begin{theorem}\label{thm:main-cable}
If $\curveset$ is the immersed multicurve associated with $K$, $\curveset_{p,q}$ is the immersed multicurve associated with $K_{p,q}$, and $\tilde\curveset$ and $\tilde\curveset_{p,q}$ are the corresonding lifts to $\pplane = \R^2 \setminus \Z^2$, then $\tilde\curveset_{p,q}$ is  homotopic to $\boldsymbol f_{p,q}(\tilde\curveset)$.
\end{theorem}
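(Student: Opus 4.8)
The plan is to realize the $(p,q)$-cabling operation as a gluing of bordered pieces and to compute the effect of this gluing on the immersed-curve invariant using the combinatorial machinery of \cite{HW}. Recall that $M_{p,q} = S^3 \setminus \nu(K_{p,q})$ is obtained from $M = S^3 \setminus \nu(K)$ by gluing, along $\partial M$, the cable space (the complement of the $(p,q)$ pattern curve in a solid torus), which is Seifert fibered. Decomposing the pattern so that it lies on the splitting torus of the standard genus-one Heegaard splitting of the solid torus, one can arrange that $\CFhat(M_{p,q})$ is computed by pairing $\CFD(M)$ with a solid-torus module and the bordered trimodule of \cite{Hanselman2016}. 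I would first set this pairing up precisely, fixing the framing on $\partial M$ so that the $q$-dependence enters as a shear of the parametrization while the $p$-dependence enters through the trimodule.

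With the pairing in place, the next step is to invoke the combinatorial reformulation from \cite{HW}: when $\CFD(M)$ is of loop type --- which holds for every knot complement in $S^3$ by \cite{HRW,HRW-companion} --- the box tensor product with the relevant trimodule (together with the solid-torus input) is computed by the explicit \emph{merge} operation on loops. Concretely, one takes the loops representing $\HFhat(M)$ in the chosen framing coordinates, and the merge operation prescribes how segments of these loops are cut, replicated, and reconnected to produce the loops representing $\HFhat(M_{p,q})$. The content of Section \ref{sec:curves} is to carry out this merge operation once and for all directly in the language of immersed curves rather than in terms of abstract loop words.

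The heart of the argument, in Section \ref{sec:proof}, is to identify the output of this merge operation with the explicit map $\boldsymbol f_{p,q}$. The key geometric observation is that the merge replaces each strand of $\curveset$ running between consecutive punctures by $p$ nearly parallel strands, reconnecting their ends along the lattice in a pattern governed by $q$; tracking exactly which ends get joined shows that the reconnection is precisely the effect of the fractional plane shear $\boldsymbol g_{p,q}$, which slides each lattice point leftward along a line of slope $q/p$ to the nearest vertical line $x = np$. Composing with the linear $p$-fold vertical stretch and horizontal compression --- each a product of ordinary shears, and so acting on loop-type invariants in the standard way --- reproduces $\boldsymbol f_{p,q}$, and hence $\boldsymbol f_{p,q}(\tilde\curveset)$ is homotopic to $\tilde\curveset_{p,q}$. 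Along the way one checks the structural points: $\boldsymbol f_{p,q}$ carries $\gamma_0$ (which wraps the cylinder once and carries the trivial local system) to the corresponding distinguished component for $K_{p,q}$, and transports local systems on the remaining components without alteration; the vertical shift by $\tfrac{(p-1)(q-1)}{2}$ is then pinned down by the symmetry of the invariant about height $1/2$, consistently with the genus formula $g(K_{p,q}) = p\, g(K) + \tfrac{(p-1)(q-1)}{2}$.

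I expect the main obstacle to be exactly this last identification: the merge operation of \cite{HW} is defined combinatorially on reduced loop representatives, and showing that the immersed curve it determines coincides --- up to homotopy in $\pplane$ --- with the image of $\curveset$ under the non-linear map $\boldsymbol f_{p,q}$ requires a careful local analysis near the punctures, where $\boldsymbol f_{p,q}$ fails to be linear and where the reconnection pattern of loop ends is most delicate. One must also verify that the construction is independent of the auxiliary choices (the framing, the chosen reduced representative of the loops) beyond the predicted shear, so that the conclusion is genuinely a statement about homotopy classes of immersed multicurves.
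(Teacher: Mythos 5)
Your strategy coincides with the paper's: realize $M_{p,q}$ by pairing $\CFD(M)$ with a solid-torus module and the trimodule of \cite{Hanselman2016}, compute this pairing via the loop-calculus merge of \cite{HW}, translate the merge into immersed curves, and then identify the output with $\boldsymbol f_{p,q}(\tilde\curveset)$. The problem is that the step you defer as ``the main obstacle'' is not a technical remainder but the actual content of the proof, so as written there is a genuine gap: you never derive that the merge produces the fractional shear $\boldsymbol g_{p,q}$. In the paper this identification is carried out in two concrete stages. First, Proposition \ref{prp:merge} and Corollary \ref{crl:recipe} show that merging with a word in the letters $c_k$ is exactly vertical addition of the key curve $\lfloor\boldsymbol\gamma\rfloor$, i.e.\ a column-by-column shear prescribed by the $c$-indices. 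Second, Section \ref{sec:proof} does the framing bookkeeping: the solid torus is glued so that $f_1=p\ell+qm$ with $ps-qr=-1$, its invariant is the line $m$ of slope $\tfrac rp$ in the $(f,b)$ coordinates, and the merge output appears in the $(f_3,b_3)$ framing, not $(\mu_C,\lambda_C)$. It is the subsequent change of basis (first viewing the shear in the $(\mu,\lambda)$ frame, where it slides points along lines of slope $\tfrac qp$ guided by $\lceil\mu\rceil$, then straightening $\mu_C$ and $\lambda_C$) that converts the vertical column shears into the slides onto the lines $x=np$ and produces the rescaling; without this computation the formula $\boldsymbol f_{p,q}$ is asserted rather than proved.

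Two of your supporting claims are also incorrect as stated. Both $p$ and $q$ enter through the solid-torus filling slope and the gluing/framing identifications; the trimodule itself is independent of $(p,q)$, so it is not accurate that ``the $p$-dependence enters through the trimodule'' while $q$ is only a parametrization shear. More seriously, the vertical stretch and horizontal compression by $p$ is not a product of ordinary lattice-preserving plane shears and does not ``act on loop-type invariants in the standard way'': it does not preserve $\Z^2$ and corresponds to no change of parametrization of the punctured torus. In the paper this step is legitimate only because the preceding (non-lattice-preserving) linear map taking $b$ to $\mu$ sends $\Z^2$ to $p\Z\times\tfrac1p\Z$, and the rescaling merely renormalizes this deformed lattice back to $\Z^2$ while keeping $\mu_C$ vertical and $\lambda_C$ horizontal; your proposed justification for that step would fail. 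Your remaining points---that local systems are transported unchanged (this is Proposition \ref{prp:merge-general}) and that the overall vertical shift $\tfrac{(p-1)(q-1)}{2}$ is fixed by the symmetry of the curves---are consistent with the paper, but they sit on top of the missing computation rather than replacing it.
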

Note that $\boldsymbol f_{p,q}$ is periodic with period $p$ in the horizontal direction, so it makes sense to view $\boldsymbol f_{p,q}$ as a map from the cylinder $p\cylinder := (\R/p\Z)\times \R$ to $\cylinder := (\R/\Z)\times \R$ taking lattice points to lattice points. With this view, the process of computing $\HFhat(M_{p,q})$ from $\HFhat(M)$ is to lift from $\cylinder$ to $p\cylinder$ and then apply $\boldsymbol f_{p,q}$. In practice, this amounts to drawing $p$ copies of $\HFhat(M)$ in sequence, perturbing the curve by pushing lattice points along lines of slope $\tfrac q p$ until they all lie on the same vertical line, and then scaling vertically by a factor of $p$. This procedure is depicted in Figure \ref{fig:intro-example} for the case of the $(3,2)$-cable of the right-handed trefoil. It is helpful to note that the procedure of pushing lattice points along lines of slope $\tfrac q p$ can equivalently be viewed as drawing $p$ copies of the input curve with staggered heights and then translating punctures horizontally. In practice, the process for computing $\HFhat(M_{p,q})$ from $\HFhat(M)$ amounts to a three step process:
\begin{itemize}
\item[(1)] Draw $p$ copies of $\HFhat(M)$ next to each other, each scaled vertically by a factor of $p$, staggered in height such that each copy of the curve is a height of $q$ units lower than the previous copy;
\item[(2)] Connect the loose ends of the successive copies of the curve; and
\item[(3)] Translate the pegs horizontally so that they lie in the same vertical line, carrying the curve along with them.
\end{itemize}


\begin{figure}
\labellist 

\endlabellist
\includegraphics[scale=1.2]{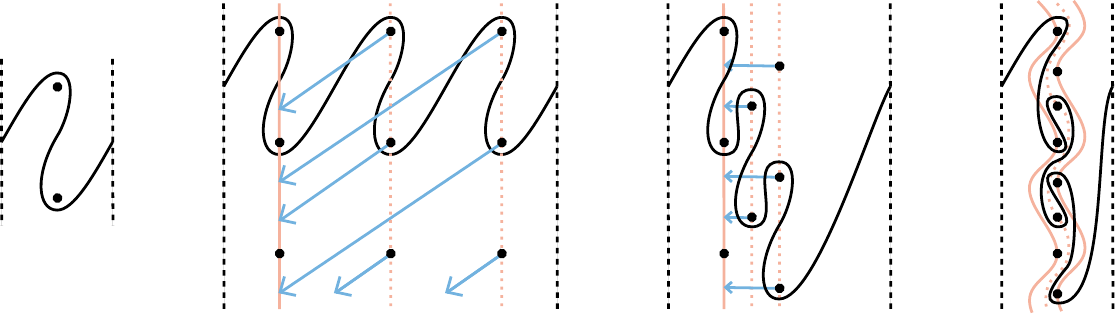}
\caption{Computation of the immersed curve associated with the $(3,2)$-cable of the right handed trefoil, starting from the trefoil curve pictured on the left. The two middle diagrams are two ways of thinking about the construction starting from three copies of the trefoil curve: we either slide lattice points along lines of slope $\tfrac 2 3$ or we stagger the heights of the three copies of the trefoil curve and then slide lattice points horizontally. Either way the result is the curve on the right.}\label{fig:intro-example}
\end{figure}

\subsection*{Numerical concordance invariants extracted from curves.} As an illustration of Theorem \ref{thm:main-cable} at work, we can revisit the work of Hedden \cite{Hedden2005,Hedden2009} and Van Cott \cite{VanCott2010}, culminating in a result of Hom \cite{Hom2014}, which establishes the behaviour of the $\tau$-invariant under cabling. Since $\tau(K)$ can be easily extracted from the immersed multicurve $\HFhat(M)$, we can recover this cabling behavior from Theorem \ref{thm:main-cable}. The same is true for some other numerical invariants. We begin by making an observation that is implicit in earlier work: Let $\curveset = (\gamma_0, \ldots, \gamma_n)$ denote the underlying set of immersed curves for $\HFhat(M)$, with $\gamma_0$ the unique component which wraps around the cylinder. This component is itself an invariant of $K$, so it will sometimes be convenient to express it as $\gamma_0(K)$.

\begin{proposition}The curve $\gamma_0(K)$ is an invariant of the concordance class of $K$.\end{proposition}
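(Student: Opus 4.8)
The plan is to show that $\gamma_0(K)$ depends only on the knot Floer homology package up to the equivalence induced by concordance, and the cleanest route is through the relationship between $\gamma_0$ and the local $h$-invariants (equivalently, the behaviour of $\tau$ and the Upsilon-type data) which are themselves concordance invariants. More precisely, recall that $\gamma_0$ is the unique component of $\HFhat(M)$ that wraps once around the cylinder and carries the trivial local system, and that it determines (and is determined by) the ``staircase-visible'' part of $\CFKminus(K)$ — the information that survives after tensoring with the localized ring, i.e. the stable equivalence class of the knot complex. The key input is that stable equivalence of $\CFKminus$ is a concordance invariant: if $K_0$ and $K_1$ are concordant then $\CFKminus(K_0)$ and $\CFKminus(K_1)$ are stably equivalent (equivalently, locally equivalent), a fact due to Hom and to Ozsv\'ath--Stipsicz--Szab\'o.

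First I would recall precisely how $\gamma_0(K)$ is extracted from $\CFKminus(K)$: by \cite{HRW-companion}, the pairing of $\HFhat(M)$ with a line of slope $n$ (or with the meridian) computes $\HFhat$ of the corresponding Dehn filling, and the component $\gamma_0$ is exactly the part that controls $\HFhat$ of large surgeries in a single $\spinc$ structure up to the ``reduced'' part; concretely $\gamma_0$ is the immersed-curve avatar of the vertical/horizontal complex, i.e. of $\CFKminus(K)$ modulo $U$-torsion in the relevant sense. The second step is to invoke the concordance invariance of this reduced complex: a concordance from $K_0$ to $K_1$ gives, via the associated cobordism maps, a local equivalence $\CFKminus(K_0)\simeq \CFKminus(K_1)$, hence an identification of the vertical and horizontal homologies together with their differentials, hence a homotopy of the curves $\gamma_0(K_0)$ and $\gamma_0(K_1)$ in $\pplane$. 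The third step is bookkeeping: one checks that the homotopy produced is a homotopy of decorated immersed curves in the cylinder $\pcylinder$ (the vertical centering at height $\tfrac12$ is automatic from the symmetry of the invariant, so no normalization issue arises), and that local systems play no role since $\gamma_0$ always carries the trivial one-dimensional local system.

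The main obstacle is making rigorous the passage from ``local equivalence of $\CFKminus$'' to ``homotopy of $\gamma_0$'' — that is, verifying that the immersed-curve functor of \cite{HRW, HRW-companion} sends locally equivalent complexes to curves with homotopic wrapping components. One way to sidestep a delicate direct argument is to observe that $\gamma_0(K)$ is equivalent data to the collection of Dehn surgery invariants $\{\HFhat_{\mathrm{red}}\text{-free part of } S^3_{p/q}(K)\}$ over all slopes, extracted from $\gamma_0$ by pairing with lines; since each such surgery is a concordance-invariant construction only in a weak sense, the honest argument really does route through the stable equivalence class of $\CFKminus$. I therefore expect the writeup to: (i) identify $\gamma_0$ with the local equivalence class of the knot complex, citing the dictionary of \cite{HRW-companion}; (ii) cite Hom / Ozsv\'ath--Stipsicz--Szab\'o for concordance invariance of that class; and (iii) note that the dictionary is natural enough that a local equivalence of complexes yields a homotopy of curves. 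Step (i) is where the technical care is needed, but it is essentially a restatement of results already in the literature.
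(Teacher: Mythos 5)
Your proposal is correct and follows essentially the same route as the paper: the paper also derives the invariance of $\gamma_0(K)$ from Hom's concordance invariant (the smallest direct summand of $\CFKminus(K)$, up to homotopy equivalence, supporting the homology of $S^3$ --- equivalently the stable/$\epsilon$-equivalence class) together with the recipe of \cite[Section 4]{HRW-companion} for passing from the knot complex to immersed curves, observing that the curves obtained from this summand form a subset of $\HFhat(M)$ necessarily containing $\gamma_0$. Your step (i)--(iii) outline matches the paper's argument, and the identification of $\gamma_0$ with the $\epsilon$-equivalence class that you invoke is exactly what the paper records in the remark following the proof.
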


\begin{proof} This follows from \cite{Hom-survey} and the recipe for deriving $\HFhat(M)$ from $\CFKminus(K)$ described in \cite[Section 4]{HRW-companion}. The concordance invariant described in \cite{Hom-survey} is the smallest direct summand of $\CFKminus(K)$, up to homotopy equivalence, which supports the homology of $S^3$. The set of immersed curves derived from this summand is a subset of the immersed curves $\HFhat(M)$ which necessarily contains $\gamma_0(K)$. This subset of curves, and in particular $\gamma_0(K)$, is thus a concordance invariant. \end{proof}

Note that the concordance invariant described in \cite{Hom-survey} is slightly stronger than $\gamma_0$ since some information may be lost when passing from complexes to immersed curves (namely, diagonal arrows are ignored). In fact, $\gamma_0(K)$ carries exactly the same information as the $\epsilon$-equivalence class of $K$ defined in \cite{Hom-survey}. Any number that can be extracted from $\gamma_0$ is automatically a concordance invariant, and several familiar concordance invariants can be defined in this way. The two most common are $\tau$ and Hom's $\epsilon$-invariant, which are extracted from $\gamma_0$ as follows: Starting on the section of $\gamma_0$ which wraps around the back of the cylinder (say, at the unique intersection of $\gamma_0$ with the line $x = \tfrac 1 2$), and moving rightward along $\gamma_0$, let $a$ denote the first intersection of $\gamma_0$ with the vertical axis $x = 0$. Then the integer $\tau$ records the height of the intersection point $a$ (here we use a discrete notion of height given by the greatest integer less than the $y$-coordinate of $a$). Continuing along $\gamma_0$ from $a$, one of three things can happen: $\gamma_0$ can turn downwards, it can turn upwards, or it can continue straight to wrap around the cylinder. This is recorded by $\epsilon$, which takes the values $+1$, $-1$, or $0$ in these three cases, respectively. (Both of these observations are made in \cite{HRW-companion}.) Note that if $\epsilon = 0$ then there is only one intersection of $\gamma_0$ with the vertical axis, so $\gamma_0$ is simply a horizontal curve, which is the immersed curve associated with the complement of the unknot. Now consider the effect of cabling on each of these invariants. Throughout, let $\gamma_0 = \gamma_0(K)$ and let $\gamma'_0 = \gamma_0(K_{p,q})$.

\begin{theorem}[{Hom \cite[Theorem 2]{Hom2014}}]\label{thm:epsilon}
If $\epsilon(K)=\pm1$ then $\epsilon(K_{p,q})=\epsilon(K)$; and if $\epsilon(K)=0$ then $\epsilon(K_{p,q})=\epsilon(T_{p,q})$.
\end{theorem}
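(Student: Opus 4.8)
The plan is to extract $\epsilon(K_{p,q})$ directly from $\gamma_0' = \gamma_0(K_{p,q})$ using Theorem \ref{thm:main-cable}, which tells us that (up to homotopy and ignoring vertical positioning) $\gamma_0'$ is obtained from $\gamma_0 = \gamma_0(K)$ by the following recipe: draw $p$ vertically-scaled copies of $\gamma_0$ side by side with heights staggered by $q$, connect the loose ends, and slide the pegs back onto a single vertical line. The component containing the unique intersection with $x = \tfrac12$ after this operation is $\gamma_0'$, since $\gamma_0$ is the unique component of $\curveset$ that wraps around the cylinder and the merge/cabling construction sends it to the unique wrapping component of $\curveset_{p,q}$. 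So the whole argument reduces to reading off, from this explicit picture, which way the curve turns immediately after its first crossing of the vertical axis $x = 0$ when traversed rightward from the back of the cylinder.

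First I would dispose of the case $\epsilon(K) = 0$. Then $\gamma_0$ is the horizontal curve (the unknot curve), and the three-step cabling procedure applied to $p$ copies of a horizontal line staggered by $q$ produces precisely the curve $\gamma_0(T_{p,q})$ — indeed the construction in Theorem \ref{thm:main-cable} is manifestly natural enough that applying it to the unknot yields the $(p,q)$-torus knot, so $\gamma_0(K_{p,q}) = \gamma_0(U_{p,q}) = \gamma_0(T_{p,q})$ and hence $\epsilon(K_{p,q}) = \epsilon(T_{p,q})$; one could also just cite $\epsilon(T_{p,q}) = \operatorname{sgn}(q)$ and check directly.

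For the main case $\epsilon(K) = \pm 1$, I would track the distinguished strand of $\gamma_0$ — the piece that wraps around the back and then, on its first return to $x=0$, turns up (if $\epsilon = +1$) or down (if $\epsilon = -1$). In the $p$-fold staggered picture, the wrapping strand of $\gamma_0'$ is assembled by concatenating $p$ successive pieces of staggered copies of $\gamma_0$, glued through the puncture-sliding of step (3); I want to argue that the \emph{first} turn of $\gamma_0'$ after crossing $x = 0$ is inherited from the first turn of the relevant copy of $\gamma_0$, so the sign is preserved. Concretely, near the vertical line $x = 0$ the local behavior of $\gamma_0'$ is governed by a single copy of $\gamma_0$ (the one whose segment lands at the relevant height after the vertical scaling by $p$ and the shift by $\tfrac{(p-1)(q-1)}{2}$), and the peg-sliding of step (3) — being a horizontal shear fixing the lines $x = 0$ and fixing orientations — does not reverse the direction of any turn. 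Hence the up/down alternative for $\gamma_0'$ agrees with that for $\gamma_0$, giving $\epsilon(K_{p,q}) = \epsilon(K)$.

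The main obstacle is the bookkeeping in the last paragraph: one must be careful that, after vertical scaling by $p$ and the half-integer vertical shift, the segment of $\gamma_0'$ realizing the first crossing of $x=0$ really does come from the first crossing of $\gamma_0$ and not from some later excursion of a different staggered copy, and that the peg-sliding step does not introduce a spurious extra intersection with $x = 0$ before the one we care about. This is a connectivity/ordering argument about how the $p$ copies concatenate — intuitively clear from Figure \ref{fig:intro-example}, but it is the one place where the staggering parameter $q$ and the sign conventions genuinely enter, and where a sign error would flip the conclusion. Once this is pinned down, the theorem follows; in fact the same analysis will reappear, with more detail, in the treatment of $\tau$ and the other numerical invariants.
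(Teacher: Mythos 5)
Your argument is essentially the paper's own proof: both cases are handled exactly as in the paper, namely that for $\epsilon(K)=\pm1$ the first crossing of $\gamma_0'$ with the vertical axis comes from the first crossing of the first copy of $\gamma_0$ and the turn direction is preserved under the cabling transformation, while for $\epsilon(K)=0$ the curve $\gamma_0$ is the unknot curve and so $\gamma_0'$ is $\gamma_0(T_{p,q})$. The only quibble is that you have the sign convention backwards (in the paper a downward turn gives $\epsilon=+1$), but this is immaterial since the argument only uses that the direction of the first turn is preserved.
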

\begin{proof}[A quick re-proof of Theorem \ref{thm:epsilon}]
By Theorem \ref{thm:main-cable}, $\gamma'_0$ is obtained from $\gamma_0$ by placing $p$ copies of $\gamma_0$ next to each other, with appropriate vertical shifts, and compressing them into one vertical line. The first intersection of $\gamma'_0$ with the vertical axis thus comes from the first intersection of the first copy of $\gamma_0$ with the vertical axis, and clearly if $\gamma_0$ turns upward or downward at this point than $\gamma'_0$ does also. On the other hand if $\epsilon(K) = 0$ then $\gamma_0$ is simply a horizontal line, the same as the curve associated with the unknot. It follows that $\gamma'_0$ agrees with $\gamma_0(T_{p,q})$, since $T_{p,q}$ is the $(p,q)$-cable of the unknot, and thus $\epsilon(K_{p,q}) = \epsilon(T_{p,q})$.
\end{proof}

The value $\epsilon(T_{p,q})$ was also computed in \cite[Theorem 2]{Hom2014}; we can recover this computation by viewing $T_{p,q}$ as the $(p,q)$-cable of the unknot. In this case $\gamma_0$ is horizontal in $\cylinder$ and lifts to a horizontal line in $p\cylinder$. To compute $\gamma'_0$ from this we shift the $i$th column downwards by $\frac{iq}{p}$ and then compress horizontally. If $|q| = 1$ then $T_{p,q}$ is unknotted and we must have $\epsilon(T_{p,q}) = 0$; indeed, in this case every column of lattice points shifts by less than one unit, so it is possible for $\gamma'_0$ to remain horizontal despite the shift. On the other hand, if $q >1$ then the shift causes $\gamma'_0$ to turn downwards, so $\epsilon(T_{p,q}) = +1$; similarly, if $q < -1$ then $\gamma'_0$ turns upward and $\epsilon(T_{p,q}) = -1$.

 \begin{theorem}[{Hom \cite[Theorem 1]{Hom2014}}]\label{thm:tau}
If $\epsilon(K)=\pm1$ then $\tau(K_{p,q})=p\tau(K)+\frac{(p-1)(q\mp1)}{2}$; and if $\epsilon(K)=0$ then $\tau(K_{p,q})=\tau(T_{p,q}) = (-1)^{\operatorname{sign}(q)} \tfrac{(p-1)(|q|-1)}{2}$.
\end{theorem}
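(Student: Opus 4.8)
The plan is to derive the $\tau$ formula directly from Theorem~\ref{thm:main-cable} together with the description of $\tau$ as the height of the first intersection of $\gamma_0$ with the vertical axis, in exact parallel with the re-proof of Theorem~\ref{thm:epsilon} given above. First I would treat the case $\epsilon(K)=\pm1$. By Theorem~\ref{thm:main-cable} the curve $\gamma_0' = \gamma_0(K_{p,q})$ is obtained from $\gamma_0 = \gamma_0(K)$ by taking $p$ copies, applying $\boldsymbol f_{p,q}$, and (crucially for this statement) the vertical shift by $\tfrac{(p-1)(q-1)}{2}$. As noted in the re-proof of Theorem~\ref{thm:epsilon}, the first intersection of $\gamma_0'$ with the vertical axis $x=0$ comes from the first intersection of the first copy of $\gamma_0$ with the vertical axis, i.e.\ from the point $a$ whose discrete height is $\tau(K)$. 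Under $\boldsymbol f_{p,q}$ this copy is stretched vertically by $p$, so before the global shift the relevant intersection sits at discrete height $p\tau(K)$; then the vertical shift by $\tfrac{(p-1)(q-1)}{2}$ moves it to $p\tau(K) + \tfrac{(p-1)(q-1)}{2}$.

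The one subtlety is that $\tfrac{(p-1)(q-1)}{2}$ differs from the claimed $\tfrac{(p-1)(q\mp 1)}{2}$, so I would need to account for the discrepancy $\tfrac{(p-1)(q-1)}{2} - \tfrac{(p-1)(q\mp 1)}{2}$, which is $0$ when $\epsilon(K)=+1$ and $p-1$ when $\epsilon(K)=-1$. I expect this to be the main (and essentially the only) obstacle. It comes from being careful about which copy of $\gamma_0$ contributes the first axis crossing and exactly where on that copy the crossing lies after the shear $\boldsymbol g_{p,q}$ is applied: when $\epsilon(K)=-1$, the segment of $\gamma_0$ leaving the wrapping part turns \emph{upward}, so the first axis intersection of the staggered-and-sheared family is governed by the \emph{last} of the $p$ copies rather than the first, or equivalently the relevant lattice point is pushed across an additional vertical line of the grid $x = np$; tracking this bookkeeping carefully produces exactly the extra $p-1$. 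I would phrase this by working in the cylinder $p\cylinder$, drawing the $p$ staggered copies with the $i$th copy lowered by $iq$ relative to vertical scale, and following the arc of $\gamma_0$ that realizes $\tau(K)$ through step~(3) of the three-step procedure, noting whether it crosses $x=0$ before or after it is dragged past the peg column.

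For the case $\epsilon(K)=0$, the curve $\gamma_0$ is horizontal, so $\gamma_0'$ agrees with $\gamma_0(T_{p,q})$ exactly as in the re-proof of Theorem~\ref{thm:epsilon}, reducing the computation to $\tau(T_{p,q})$. This I would compute directly from the procedure applied to the horizontal curve: lift to $p\cylinder$, shift the $i$th column down by $\tfrac{iq}{p}$ after vertical scaling (equivalently slide lattice points along lines of slope $\tfrac{q}{p}$), compress horizontally, and read off the first axis crossing. When $q>1$ the curve $\gamma_0'$ turns downward and its first intersection with $x=0$ occurs at discrete height matching $\tfrac{(p-1)(q-1)}{2}$; when $q<-1$ it turns upward and one gets $-\tfrac{(p-1)(|q|-1)}{2}$; when $|q|=1$ the cable is unknotted and $\tau=0$, consistent with $\tfrac{(p-1)(|q|-1)}{2}=0$. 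These three cases assemble into the stated formula $\tau(T_{p,q}) = (-1)^{\operatorname{sign}(q)}\tfrac{(p-1)(|q|-1)}{2}$, completing the proof; the sign convention here agrees with the values of $\epsilon(T_{p,q})$ computed above.
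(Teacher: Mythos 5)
Your overall strategy is the same as the paper's (track the first intersection of $\gamma_0'$ with the vertical axis using Theorem \ref{thm:main-cable}, split into $\epsilon = \pm 1$ and $\epsilon = 0$, and reduce the $\epsilon=0$ case to $T_{p,q}$), and your $\epsilon = 0$ discussion matches the paper. The gap is exactly at the step you flag as "the main obstacle": for $\epsilon(K) = -1$ you do not actually produce the extra $p-1$, you only assert that "tracking this bookkeeping carefully" will yield it, and the mechanism you propose for it is not correct. The first axis intersection of $\gamma_0'$ comes from the first intersection of the \emph{first} copy of $\gamma_0$ in both cases $\epsilon = \pm 1$ (this is the same observation used in the re-proof of Theorem \ref{thm:epsilon}); it is not "governed by the last of the $p$ copies" when $\epsilon = -1$, and no lattice point is ever "pushed across an additional vertical line $x = np$" --- by construction each lattice point stops at the \emph{first} such line it meets. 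What actually happens is the following. The crossing point of the first copy of $\gamma_0$ lies between the lattice points at heights $\tau(K)$ and $\tau(K)+1$ on its column; under $\boldsymbol f_{p,q}$ (with the vertical shift) these two points map to heights $h_1 = p\tau(K) + \tfrac{(p-1)(q-1)}{2}$ and $h_2 = h_1 + p$, and the $p-1$ lattice points coming from the other $p-1$ columns land on the vertical axis strictly between $h_1$ and $h_2$. The discrete height of the crossing is then decided by which side of these interposed points the curve passes: if $\gamma_0$ turns downward after the crossing ($\epsilon = +1$) the image crosses just above $h_1$, giving $\tau(K_{p,q}) = h_1$, while if it turns upward ($\epsilon = -1$) it crosses just below $h_2$, giving $\tau(K_{p,q}) = h_2 - 1 = p\tau(K) + \tfrac{(p-1)(q+1)}{2}$.

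Note also that even your $\epsilon = +1$ computation silently uses this same point: saying the crossing "sits at discrete height $p\tau(K)$" after stretching treats the vertical scaling as if it acted directly on discrete heights, which is only justified once you know the crossing lies \emph{below} all $p-1$ newly interposed lattice points. So the missing ingredient in both signs is the identification of those $p-1$ intermediate lattice points between $h_1$ and $h_2$ and the observation that the turning direction of $\gamma_0$ at its first axis crossing determines whether $\gamma_0'$ passes below or above all of them. Once that is supplied, the case analysis you outline (including the $T_{p,q}$ computation for $\epsilon = 0$) assembles into the stated formula exactly as in the paper.
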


\labellist \tiny
\pinlabel $\tau$ at 27 17
\pinlabel $\tau+1$ at 35 65
\pinlabel $p\tau+\tfrac{(p-1)(q-1)}{2}$ at 127 17
\pinlabel $p\tau+p+\tfrac{(p-1)(q-1)}{2}$ at 135 65
\endlabellist
\parpic[r]{
 \begin{minipage}{60mm}
 \centering
 \includegraphics[scale=1]{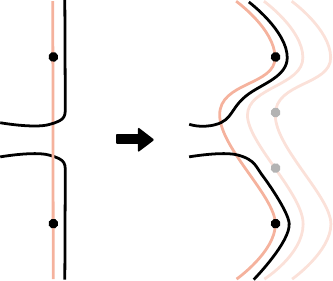} \hspace{2.5 cm}
 \captionof{figure}{Calculating $\tau(K_{p,q})$.}
 \label{fig:tau-proof}
  \end{minipage}%
  }
\begin{proof}[A quick re-proof of Theorem \ref{thm:tau}]
The first intersection of $\gamma'_0$ with the vertical axis clearly comes from the first intersection of the first copy of $\gamma_0$ with the vertical axis. This intersection occurs between the lattice points at height $\tau(K)$ and $\tau(K)+1$; after applying $\boldsymbol f_{p,q}$ and the appropriate vertical shift, these lattice points map to heights $h_1 = p\tau(K) + \tfrac{(p-1)(q-1)}{2}$ and $h_2 = p\tau(K) + p + \tfrac{(p-1)(q-1)}{2}$.  Note that there are $p-1$ lattice points between these two heights; whether or not $\gamma'_0$ first intersects the vertical axis above or below these points depends on the behavior of $\gamma_0$ just after it crosses the vertical axis, as pictured in Figure \ref{fig:tau-proof}. If $\gamma_0$ turns downward (i.e. if $\epsilon(K) = +1$), then $\gamma'_0$ will also turn downward and meets the vertical axis just above height $h_1$; thus $\tau(K_{p,q}) = h_1$. If $\gamma_0$ turns upward (i.e. if $\epsilon(K) = -1$), then $\gamma'_0$ will also turn upwards and meets the vertical axis just below height $h_2$; thus $\tau(K_{p,q}) = h_2 - 1 = p\tau(K)+\frac{(p-1)(q+1)}{2}$. Finally, if $\epsilon(K) = 0$ then $\gamma_0$ agrees with the curve invariant of the unknot, so $\gamma'_0$ is the curve associated with $T_{p,q}$, and thus $\tau(K_{p,q}) = \tau(T_{p,q})$. In particular, if $q > 1$ then $\gamma'_0$ bends down after its first intersection with the vertical axis, and as above $\tau(K_{p,q}) = h_1 = \tfrac{(p-1)(q-1)}{2}$. If $q < -1$ then $\gamma'_0$ bends upward and $\tau(K_{p,q}) = h_2 - 1 = -\tfrac{(p-1)(-q-1)}{2}$, while if $|q| = 1$ then $\gamma'_0$ is horizontal and $\tau(K_{p,q}) = 0$.
\end{proof}

Other concordance invariants can be extracted from $\gamma_0(K)$. For instance, for any positive integer $i$, the invariant $\phi_i(K)$ introduced recently by Dai, Hom, Stoffregen, and Truong \cite{DHST} counts the number of left arcs of $\gamma_0$ of length $i$, where a left arc of length $i$ refers to a segment of $\gamma_0$ connecting successive intersections with the vertical axis whose height differ by $i$ which does not wrap around the cylinder and which lies to the left of the vertical axis. These arcs are counted with sign coming from the orientation of $\gamma_0$, with downward oriented arcs counting positively.\footnote{This is a straightforward translation of the definition of $\phi_i$ given in \cite{DHST} to the language of immersed curves. The \emph{standard complex} described in \cite{DHST} corresponds precisely to the component $\gamma_0$ of $\HFhat(M)$. The integers $\phi_i$ count horizontal arrows of length $i$ in the the standard complex, which correspond to length $i$ right arcs in $\gamma_0$. By symmetry, we can equivalently count length $i$ left arcs in $\gamma_0$.} Like $\tau$, the integers $\phi_i$ are of particular interest in the study of knot concordance because they are additive under connected sum; that is, they define concordance homomorphisms.

Returning to cabling, the behavior of the invariants $\phi_i$ is more complicated. In particular, $\phi_i(K_{p,q})$ does not depend only on $\phi_i(K)$, or even on the collection of invariants $\tau(K)$, $\epsilon(K)$, and $\phi_j(K)$ for all $j$. In order to express the effect of cabling we need to keep track of how each left arc in $\gamma_0(K)$ behaves at each end. More precisely, we can define refined invariants $\phi_i^{++}$, $\phi_i^{+-}$, $\phi_i^{-+}$, and $\phi_i^{--}$ which encode the signed count of four different types of length $i$ left arcs in $\gamma_0$, with the type determined by the directions $\gamma_0$ turns at each end of the segment; $+$ indicates that $\gamma_0$ turns upward and $-$ indicates that $\gamma_0$ turns downward, with the first sign indicating the behavior at the top of the arc and the second sign indicating the behavior at the bottom of the arc, as in Figure \ref{fig:cases}. Note that $\phi_i = \phi_i^{++} + \phi_i^{+-} + \phi_i^{-+} + \phi_i^{--}$. 
With these extra quantities defined it is possible to derive explicit formulas for $\phi_i(K_{p,q})$. However, these formulas are cumbersome and we will not attempt to compute them here in general. As an example, we will consider the special case of $(2,1)$-cabling.



\begin{proposition}\label{prop:phi-formulas} For $i>1$ all the variants of $\phi_i(K_{2,1})$ are either determined by 
\[\phi_{2n}^{\pm \pm}(K_{2,1})=\phi_n^{\pm\pm}(K) \qquad \phi_{2n \pm 1}^{\pm \mp}(K_{2,1})=\phi_n^{\pm \mp}(K)\] or they are trivial. 
In particular, for $n \ge 1$
\begin{align*}
\phi_{2n}(K_{2,1})&=\phi_n^{++}(K)+\phi_n^{--}(K) \\
\phi_{2n+1}(K_{2,1})&= \phi_n^{+-}(K)+\phi_{n+1}^{-+}(K)
\end{align*}
\end{proposition}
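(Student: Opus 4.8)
The plan is to apply Theorem~\ref{thm:main-cable} in the special case $p=2$, $q=1$, and then carefully trace through what $\boldsymbol f_{2,1}$ does to the individual left arcs of $\gamma_0(K)$. Recall that for $(2,1)$-cabling the recipe is: draw two copies of $\gamma_0$ side by side, each stretched vertically by a factor of $2$, with the second copy shifted down by $q=1$ unit (relative to the stretched scale, this is a shift of $2$ in the stretched picture versus the arc lengths, which are themselves doubled), connect loose ends, and then compress the two columns of pegs into a single vertical line. Since only $\gamma_0$ is relevant here (it carries the trivial local system and is the unique component relevant to $\phi_i$), I would set up coordinates so that $\gamma_0$ is presented by its sequence of intersections with the vertical axis together with the turning directions at each, i.e. the combinatorial data $(\tau,\epsilon,\phi_i^{\pm\pm})$ encode it up to the information that matters.

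First I would describe $\gamma_0' = \gamma_0(K_{2,1})$ arc-by-arc. After stretching by $2$, a left arc of $\gamma_0$ of length $n$ with turning type $(\sigma,\tau)$ (top sign $\sigma$, bottom sign $\tau$) becomes an arc of ``length'' $2n$ with the same turning types at its two ends — this is the source of the $\phi_{2n}^{\pm\pm}(K_{2,1}) = \phi_n^{\pm\pm}(K)$ family: arcs entirely within one of the two copies, with an even length and unchanged turning data. Second, I would analyze the arcs that are created when the loose ends of the two copies are joined: these run from a crossing in copy~1 to a crossing in copy~2 (or vice versa), and because copy~2 sits one unit lower (pre-stretch), such a splice arc has odd length $2m\pm1$. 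Here the turning behavior at one end is inherited from the first copy and at the other end from the second copy; a short case check on how the ``straight-through'' portion of $\gamma_0$ (the part wrapping around the cylinder, governed by $\epsilon$) interacts with the down-shift shows that the only splice arcs that survive as genuine (non-wrapping) left arcs are those of type $(\pm,\mp)$, giving $\phi_{2n\pm1}^{\pm\mp}(K_{2,1}) = \phi_n^{\pm\mp}(K)$, and that the $(\pm,\pm)$-type splices are forced to wrap or cancel and hence contribute trivially. The remaining variants ($\phi_{2n}^{+-}$, $\phi_{2n}^{-+}$, $\phi_{2n\pm1}^{++}$, $\phi_{2n\pm1}^{--}$, and $\phi_1^{\pm\mp}$ when $i=1$ is excluded) are then exactly the ones asserted to vanish. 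The displayed consequences for $\phi_{2n}$ and $\phi_{2n+1}$ follow by summing over the four turning types and matching indices: $\phi_{2n} = \phi_{2n}^{++}+\phi_{2n}^{+-}+\phi_{2n}^{-+}+\phi_{2n}^{--} = \phi_n^{++}(K)+0+0+\phi_n^{--}(K)$, and for the odd case one reads off that length $2n+1$ arises as $2n+1 = 2n+1$ of type $(+,-)$ (so $m=n$) and as $2(n+1)-1$ of type $(-,+)$ (so $m=n+1$), giving $\phi_{2n+1}^{+-}+\phi_{2n+1}^{-+} = \phi_n^{+-}(K)+\phi_{n+1}^{-+}(K)$.

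The main obstacle will be the bookkeeping in the splice step: correctly identifying which of the ``loose ends'' of copy~1 and copy~2 get connected to each other, and verifying that the resulting arc genuinely lies to the left of the vertical axis (rather than wrapping around the back of the cylinder) precisely in the claimed cases. This requires being careful about the role of $\epsilon$ — the behavior of $\gamma_0$ immediately after it crosses the vertical axis on its wrap-around segment — since the down-shift by one unit can convert a would-be wrapping segment into a left arc or vice versa, and this is exactly what distinguishes the surviving $(\pm,\mp)$ splices from the trivial $(\pm,\pm)$ ones. I would handle this by drawing the local picture near the vertical axis for each of the finitely many combinations of turning directions (as in Figure~\ref{fig:cases} and Figure~\ref{fig:tau-proof}) and checking each case directly; the orientation signs then follow automatically from the convention that downward-oriented arcs count positively, which is preserved by the orientation-preserving map $\boldsymbol f_{2,1}$. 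The hypothesis $i>1$ is needed precisely to avoid the degenerate length-one arcs adjacent to the wrap-around segment, whose analysis is governed separately by the $\tau$ and $\epsilon$ computations already carried out in Theorems~\ref{thm:epsilon} and~\ref{thm:tau}.
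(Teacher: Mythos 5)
Your mechanism for the odd/even dichotomy is not the right one, and it would not give the stated formulas. In the actual construction, \emph{all} left arcs of length greater than one in $\gamma_0(K_{2,1})$ come from left arcs of the \emph{first} copy of $\gamma_0(K)$ alone, and the new length is $2i-1$, $2i$, or $2i+1$ according to the turning type ($-+$, $++$ or $--$, $+-$) of that single arc: the $\pm 1$ correction arises because the interleaved odd-height lattice points (coming from the down-shifted second copy) change the discrete heights of the two endpoint crossings in a way that depends on the turning directions, as in Figure~\ref{fig:cases}. Your proposal instead asserts that every within-copy arc of length $n$ and type $(\sigma,\tau)$ yields a length-$2n$ arc of the same type (false for types $+-$ and $-+$), and that the odd-length arcs are ``splice arcs'' running between the two copies with one end's turning data inherited from each copy. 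That splice picture cannot produce $\phi_{2n+1}^{+-}(K_{2,1})=\phi_n^{+-}(K)$, since it would mix the end behavior of two different arcs of $\gamma_0(K)$ rather than reproduce the end behavior of one; and counting within-copy arcs from \emph{both} copies would give $\phi_{2n}^{\pm\pm}(K_{2,1})=2\phi_n^{\pm\pm}(K)$, off by a factor of two.

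The missing structural ingredient is the decomposition used in the paper: split the curve (before horizontal compression) along the vertical line through the first column of lattice points into $\gamma_L$ and $\gamma_R$. After compression every component of $\gamma_R$ --- which contains the entire second copy of $\gamma_0$ and all the connecting segments --- lies to the right of every even-height lattice point, so any left arc contained in $\gamma_R$ has length one; moreover a left arc of $\gamma_0(K_{2,1})$ can meet at most one component of $\gamma_L$. This is what shows that the second copy contributes nothing to $\phi_i$ for $i>1$ (explaining both the absence of double counting and the true role of the hypothesis $i>1$), and that the correspondence between long left arcs of $\gamma_0(K_{2,1})$ and left arcs of $\gamma_0(K)$ is a bijection preserving end behavior. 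Your local case-check near the vertical axis could be salvaged into the length computation $2i-1$/$2i$/$2i+1$, but without the $\gamma_L$/$\gamma_R$ argument you have neither the surjectivity statement (no other long left arcs exist) nor the correct source of the odd lengths.
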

\begin{proof}
The curve $\gamma'_0 = \gamma_0(K_{2,1})$ is constructed in three steps: take two consecutive copies of $\gamma_0 = \gamma_0(K)$; scale vertically by a factor of two and shift the second copy of $\gamma_0$ down one unit; and compress horizontally (compare Figure \ref{fig:intro-example}). Before compressing horizontally, we can divide this curve into two (non-connected) subcurves which lie to the left and right of vertical line through the first column of lattice points; let $\gamma_L$ and $\gamma_R$ denote the images of these subcurves after horizontal compression, so that $\gamma'_0 = \gamma_L \cup \gamma_R$.

\labellist \tiny
\pinlabel $\phi_2^{\!-\!-}$ at 14 -5
\pinlabel $\phi_2^{\!-\!+}$ at 36 -5
\pinlabel $\phi_2^{\!+\!-}$ at 60 -5
\pinlabel $\phi_2^{\!+\!+}$ at 84 -5
\endlabellist
\parpic[r]{
 \begin{minipage}{45mm}
 \centering
 \includegraphics[scale=1]{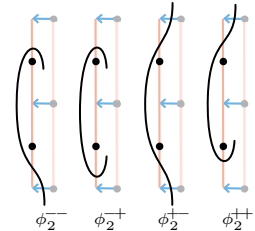}
 \captionof{figure}{Four cases complete the proof.}
 \label{fig:cases}
  \end{minipage}%
  }
The key observation is that every component of $\gamma_R$ lies to the right of every even height lattice point, and therefore any left arc on $\gamma'_0$ which lies in $\gamma_R$ must have length one. It is also clear that any left arc of $\gamma'_0$ intersects at most one component of $\gamma_L$, since otherwise it contains a full component of $\gamma_R$ which must lie to the right of some lattice point. Thus each left arc of length greater than one in $\gamma'_0$ comes from a component of $\gamma_L$, which in turn comes from a left arc of the first copy of $\gamma_0$. Conversely, every left arc of length $i$ in the first copy of $\gamma_0$ gives rise to exactly one left arc in $\gamma'_0$, which has the same end behavior. The length of this new arc depends on the end behavior: it is $2i-1$ for $-+$ arcs, $2i$ for $++$ or $--$ arcs, and $2i+1$ for $+-$ arcs (see Figure \ref{fig:cases}).
\end{proof}

We will say that $\gamma_0$ has a unique maximal length left arc of type $++$ and length $N$ if $\phi_N^{++}(K) = 1$, $\phi_i^{++}(K) = 0$ for all $i > N$, and $\phi_i^{+-}(K) = \phi_i^{-+}(K) = \phi_i^{--}(K) = 0$ for all $i \ge N$. The following is an immediate consequence of the formulas above: 

\begin{proposition}\label{prop:phi-consequence}
If $\gamma_0(K)$ has a unique maximal length left arc of type $++$ and length $N$ then $\gamma_0(K_{2,1})$ has a unique maximal length left arc of type $++$ and length $2N$.
\end{proposition}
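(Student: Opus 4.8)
The plan is to derive Proposition~\ref{prop:phi-consequence} directly from the formulas in Proposition~\ref{prop:phi-formulas}, treating it as a bookkeeping exercise about which arc types survive cabling and what lengths they acquire. First I would unpack the hypothesis: saying that $\gamma_0(K)$ has a unique maximal length left arc of type $++$ and length $N$ means $\phi_N^{++}(K)=1$, $\phi_i^{++}(K)=0$ for $i>N$, and $\phi_i^{+-}(K)=\phi_i^{-+}(K)=\phi_i^{--}(K)=0$ for $i\ge N$. I then want to show the analogous three conditions hold for $\gamma_0(K_{2,1})$ with $N$ replaced by $2N$: namely $\phi_{2N}^{++}(K_{2,1})=1$, $\phi_j^{++}(K_{2,1})=0$ for $j>2N$, and $\phi_j^{+-}(K_{2,1})=\phi_j^{-+}(K_{2,1})=\phi_j^{--}(K_{2,1})=0$ for $j\ge 2N$.

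The key computation runs through the length-doubling behavior recorded in the proof of Proposition~\ref{prop:phi-formulas}: a $++$ or $--$ arc of length $i$ in $\gamma_0(K)$ becomes an arc of length $2i$ in $\gamma_0(K_{2,1})$, a $-+$ arc of length $i$ becomes length $2i-1$, and a $+-$ arc of length $i$ becomes length $2i+1$ (with the one caveat that these formulas are for $i>1$, so I should note separately that arcs of length one contribute only in low length and do not affect the maximal-length analysis when $N\ge 1$, and indeed if $N=1$ the statement can be checked directly from the displayed formulas). Using the identities $\phi_{2n}^{\pm\pm}(K_{2,1})=\phi_n^{\pm\pm}(K)$ and $\phi_{2n\pm1}^{\pm\mp}(K_{2,1})=\phi_n^{\pm\mp}(K)$, I verify: $\phi_{2N}^{++}(K_{2,1})=\phi_N^{++}(K)=1$; for $j>2N$ with $j$ even, $\phi_j^{++}(K_{2,1})=\phi_{j/2}^{++}(K)=0$ since $j/2>N$; for $j$ odd, a $++$ arc cannot have odd length in the cable (odd lengths only arise from $+-$ and $-+$ arcs), so $\phi_j^{++}(K_{2,1})=0$ automatically. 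Similarly $\phi_j^{--}(K_{2,1})$ vanishes for $j\ge 2N$ because even values pull back to $\phi_{j/2}^{--}(K)=0$ and odd values vanish by type; $\phi_j^{-+}(K_{2,1})$ for $j\ge 2N$ is nonzero only for odd $j=2n-1$, forcing $n\ge N+1>N$ wait — I need $2n-1\ge 2N$, i.e. $n\ge N+\tfrac12$, i.e. $n\ge N+1$, so $\phi_{2n-1}^{-+}(K_{2,1})=\phi_n^{-+}(K)=0$; and $\phi_j^{+-}(K_{2,1})$ for $j\ge 2N$, nonzero only for odd $j=2n+1\ge 2N$, i.e. $n\ge N-\tfrac12$, i.e. $n\ge N$, so $\phi_{2n+1}^{+-}(K_{2,1})=\phi_n^{+-}(K)=0$ by hypothesis. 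This establishes all required vanishing.

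I expect the main (minor) obstacle to be the edge cases rather than any conceptual difficulty: I must confirm that the length-one left arcs of $\gamma_0(K_{2,1})$ coming from $\gamma_R$ (as in the proof of Proposition~\ref{prop:phi-formulas}) cannot interfere, which is immediate since $2N\ge 2>1$, and I must handle the possibility $N=1$ separately since the clean formulas of Proposition~\ref{prop:phi-formulas} are stated for $i>1$; in that case one checks directly that the hypothesis $\phi_1^{++}(K)=1$ with all other relevant invariants vanishing forces, via the three-step construction, a unique length-two $++$ arc in $\gamma_0(K_{2,1})$. Once these are dispatched, the proof is essentially the two-line computation above, so I would present it concisely, perhaps citing Figure~\ref{fig:cases} for the reader who wants to see the arc-type bookkeeping visually.
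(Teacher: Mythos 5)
Your argument is correct and takes essentially the same route as the paper, which records this proposition as an immediate consequence of the formulas in Proposition~\ref{prop:phi-formulas}; your index-by-index verification of the vanishing for $j \ge 2N$ is precisely the bookkeeping the paper leaves implicit. The separate treatment of $N=1$ is not actually needed, since the restriction $i>1$ in Proposition~\ref{prop:phi-formulas} concerns the index on the cabled knot and every index you invoke there satisfies $j \ge 2N \ge 2$.
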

Consider for example iterated $(2,1)$-cables of the right handed trefoil $T_{2,3}$; the immersed curves for the first few of these knots are shown in Figure \ref{fig:iterated-cables}. The immersed curve $\gamma_0(T_{2,3})$ has only one left arc, which has type $++$ and length 1. If we repeatedly $(2,1)$-cable this knot, there is always a single left arc of maximal length, which always has type $++$, and the length of this arc doubles in length with each iteration.

\begin{figure}[t]
\includegraphics[scale=1.25]{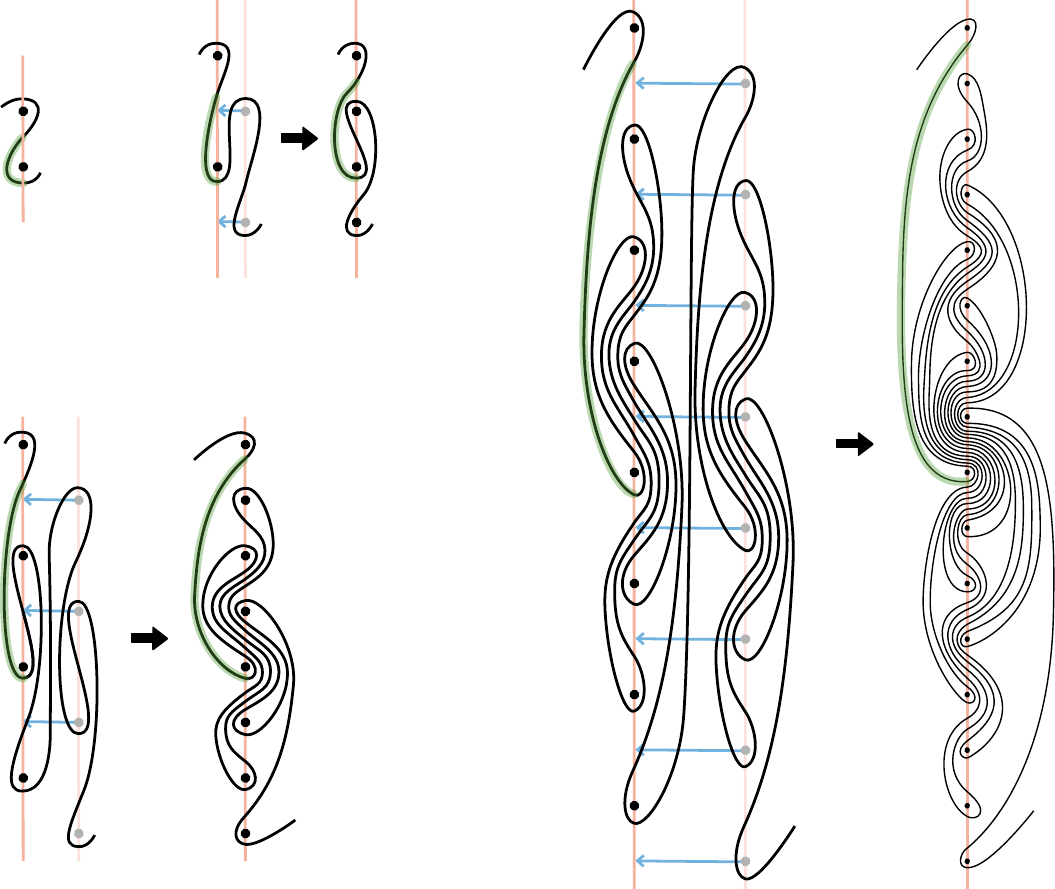}
\caption{Immersed curves for the first few iterated $(2,1)$-cables of the right handed trefoil. These are also the distinguished curve $\gamma_0$ for the knots $K_0$, $K_1$, $K_2$, and $K_3$ from Corollary \ref{cor:Zinfty-summand}. The longest left arc (highlighted) is stretched by a factor of two with each cabling iteration, thus the length of the longest left arc for $K_n$ is $2^n$.}\label{fig:iterated-cables}
\end{figure}

In \cite{DHST}, the concordance invariants $\phi_i$ were used to identify a $\Z^\infty$ direct summand in the topologically slice smooth concordance group; see also \cite{OSS2017}. The relevant infinite family of knots is built from cables of a certain knot $D$, the untwisted positively clasped Whitehead double of $T_{2,3}$. More precisely, the family of knots is given by $D_{n,n+1} \# -T_{n,n+1}$. Using Proposition \ref{prop:phi-consequence}, we can construct another $\Z^\infty$ summand from $D$ by instead taking iterated $(2,1)$-cables. The key properties of $D$ are that $(i)$ the Alexander polynomial of $D$ is trivial and $(ii)$ the distinguished component $\gamma_0$ associated to $D$ agrees with $\gamma_0(T_{2,3})$. The knot $D$ can be replaced with any other knot  which shares these two properties---an example of a hyperbolic knot with this property is $15n{113775}$.

\begin{corollary}\label{cor:Zinfty-summand}
Let $K = K_0$ be a knot for which $\Delta_K(t) = 1$ and $\gamma_0(K) = \gamma_0(T_{2,3})$. For $n \ge 1$ let $K_n$ to be the $(2,1)$-cable of $K_{n-1}$. The knots $\{K_n\}_{n=0}^\infty$ are topologically slice and linearly independent in the smooth concordance group. 
\end{corollary}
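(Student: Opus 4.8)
\subsection*{Proof proposal}

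The plan is to dispatch the two assertions in turn, each by reducing to facts already in hand. For topological sliceness I would first observe that $(2,1)$-cabling preserves triviality of the Alexander polynomial: the classical cabling formula $\Delta_{K_{p,q}}(t)=\Delta_{K}(t^{p})\,\Delta_{T_{p,q}}(t)$ specializes, since $T_{2,1}$ is the unknot, to $\Delta_{(K_{n-1})_{2,1}}(t)=\Delta_{K_{n-1}}(t^{2})$. Iterating from $\Delta_{K_{0}}(t)=1$ gives $\Delta_{K_{n}}(t)=\Delta_{K_{0}}(t^{2^{n}})=1$ for all $n$, whence each $K_{n}$ is topologically slice by Freedman's theorem.

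For linear independence in the smooth concordance group $\mathcal{C}$, the strategy is to produce, for each $n$, a concordance homomorphism that sees $K_{n}$ but annihilates $K_{0},\dots,K_{n-1}$; the natural candidate is $\phi_{2^{n}}$. The preparatory step is the induction built into Proposition \ref{prop:phi-consequence}. The base case is that $\gamma_{0}(K_{0})=\gamma_{0}(T_{2,3})$ has a unique maximal length left arc, of type $++$ and length $1$ (as noted just before the corollary and visible in Figure \ref{fig:iterated-cables}); so by Proposition \ref{prop:phi-consequence} and induction $\gamma_{0}(K_{n})$ has a unique maximal length left arc of type $++$ and length $2^{n}$. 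In particular, for every $n$ we have $\phi_{2^{n}}(K_{n})=\phi^{++}_{2^{n}}(K_{n})=1$ and $\phi_{i}(K_{n})=0$ for all $i>2^{n}$; since $2^{n}>2^{m}$ when $m<n$, this forces $\phi_{2^{n}}(K_{m})=0$ for $m<n$.

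Now suppose $\sum_{n=0}^{M}a_{n}[K_{n}]=0$ in $\mathcal{C}$ with $a_{M}\neq 0$. Since the $\phi_{i}$ are concordance homomorphisms, applying $\phi_{2^{M}}\colon\mathcal{C}\to\Z$ gives $0=\sum_{n=0}^{M}a_{n}\,\phi_{2^{M}}(K_{n})=a_{M}\,\phi_{2^{M}}(K_{M})=a_{M}$, a contradiction; hence the $K_{n}$ are linearly independent. (For the stronger conclusion that $\{K_{n}\}$ spans a $\Z^{\infty}$ summand, one refines the induction using the explicit formulas of Proposition \ref{prop:phi-formulas} to see that $\gamma_{0}(K_{n})$ has no left arc other than its maximal one; then $\Phi=(\phi_{i})_{i\ge 1}\colon\mathcal{C}\to\bigoplus_{i\ge 1}\Z$ sends $K_{n}$ to the basis vector $e_{2^{n}}$, and composing $\Phi$ with the projection onto $\bigoplus_{n\ge 0}\Z\,e_{2^{n}}$ yields a surjection onto a free abelian group, which splits with image $\langle K_{n}\rangle_{n\ge0}$.)

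I do not expect a real obstacle here: the corollary is essentially a formal consequence of Theorem \ref{thm:main-cable} (through Proposition \ref{prop:phi-consequence}), the homomorphism property of the $\phi_{i}$, and Freedman's theorem. The one point deserving care is the base of the induction---verifying that $\gamma_{0}(T_{2,3})$ has exactly the single left arc claimed, so that Proposition \ref{prop:phi-consequence} can be applied repeatedly---which the preceding discussion already supplies.
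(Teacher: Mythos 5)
Your proof is correct and follows essentially the same argument as the paper: the base case $\gamma_0(K_0)=\gamma_0(T_{2,3})$ has a unique maximal left arc of type $++$ and length $1$, induction via Proposition \ref{prop:phi-consequence} gives a unique maximal left arc of length $2^n$ for $K_n$, and then the triangularity of the values $\phi_{2^m}(K_n)$ together with the homomorphism property of the $\phi_i$ yields linear independence. The only divergence is in the sliceness step, where you iterate Freedman using $\Delta_{K_n}(t)=\Delta_{K_0}(t^{2^n})=1$ from the cabling formula, while the paper applies Freedman once to $K_0$ and observes that $(2,1)$-cabling preserves topological sliceness (the cable of a slice knot is topologically concordant to the $(2,1)$-cable of the unknot, which is the unknot); both routes are valid and equally short.
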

\begin{proof}
Applying Freedman, $\Delta_{K}(t) = 1$ implies that $K$ is topologically slice \cite{Freedman1982}. The $(2,1)$-cable of a topologically slice knot is topologically concordant to the $(2,1)$-cable of the unknot, which is the unknot; thus by induction $K_n$ is topologically slice for all $n$. On the other hand, $K_0$ has a unique maximal length left arc of type $++$ and length $1$, so Proposition \ref{prop:phi-consequence} and induction implies that $K_n$ has a unique maximal length left arc of type $++$ and length $2^n$. In particular, $\max\{i | \phi_i(K_n) \neq 0 \} = 2^n$ for all $n$. Since each $\phi_i$ is a concordance homomorphism, it follows that the knots are linearly independent in the smooth concordance group.
\end{proof}

\begin{remark}
We leave the behavior of $\phi_1$ under $(2,1)$ cabling, which was not needed in the above application, as an exercise to the motivated reader, who will find that
$$\phi_1(K_{2,1}) = - \sum_{j \ge 1} \phi_j(K) + \begin{cases} 1 & \text{ if } \tau(K) > 0 \\ 0 & \text{ if } \tau(K) \le 0 \end{cases}$$
Using corresponding formulas for variants of $\phi_1$ along with the formulas from Proposition \ref{prop:phi-formulas} and induction on $n$, it can be shown that the knots $K_n$ in Corollary \ref{cor:Zinfty-summand} in fact satisfy $\phi_i(K_n)=1$ if $i = 2^n$ and $\phi_i(K_n)=0$ otherwise.
\end{remark}


  \parpic[r]{
 \begin{minipage}{20mm}
 \centering
 \includegraphics[scale=1.2]{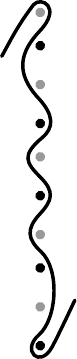}
 \captionof{figure}{}
\label{fig:non-cable-curves}
  \end{minipage}%
  }
\subsection*{Curves which do not come from cables}
In many cases Theorem \ref{thm:main-cable} provides a simple obstruction to a knot being a nontrivial cable. For example, if the curve set corresponding to a knot contains a figure eight component enclosing two adjacent lattice points (as is the case, for example, for any non-torus alternating knot; see \cite{Petkova2013}), then the knot is not a cable of a knot in $S^3$. This is because any closed component of $\HFhat(M_{p,q})$ comes from one copy of $\HFhat(M)$ before the transformation of the plane and thus encloses only lattice points with the same height modulo $p$. If we restrict our attention to $\gamma_0$, we can find knots which are not concordant to a nontrivial cable. An example of this is the knot $12n242$, whose $\gamma_0$ is pictured in Figure \ref{fig:non-cable-curves}. Because the first left arc has length 2,  if this curve comes from a $(p,q)$-cable then $p$ must be at most 3. If $p=1$, the cable operation is trivial. If $p = 2$, then the curve should not pass to the left of an odd height lattice point (gray in figure to the right) after passing to the right of an even height lattice point (black in the figure), but this clearly happens. Similarly, if $p = 3$ then the curve should not pass to the left of a point whose height is congruent to 2 mod 3, after passing to the right of a lattice point with a different height modulo 3, but this also happens. We thank Tye Lidman for asking us about the existence of such an example.

\subsection*{L-space surgeries on cable knots}\label{sub:L-space} Other properties of the knot Floer homology of cable knots are made relatively transparent by Theorem \ref{thm:main-cable}. The following is a well-known property that was established by Hom \cite{Hom2011} (building on work of Hedden \cite{Hedden2009}):

\begin{theorem}[{Hom \cite[Theorem]{Hom2011}}]\label{thm:Lspace}
Suppose $K$ is a knot admitting a positive L-space surgery. Then $K_{p,q}$ admits a positive L-space surgery if and only if  $\tfrac q p$ is at least $2g-1$, where $g$ denotes the Seifert genus of $K$. 
\end{theorem}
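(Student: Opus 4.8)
The plan is to translate the L-space condition into a statement about immersed curves and then track it through $\boldsymbol f_{p,q}$. Recall that a knot complement $M$ admits a positive L-space surgery if and only if its immersed curve invariant $\HFhat(M)$ consists of $\gamma_0$ carrying the trivial local system together with possibly some closed components that are null-homotopic in $\pplane$ and can be removed, and moreover $\gamma_0$ is homotopic to a curve of some rational slope $r > 0$ (this is precisely the ``$\gamma_0$ looks like a straight line of positive slope'' characterization of L-space knots in \cite{HRW,HRW-companion}); the set of L-space surgery slopes is then an interval of the form $(r, \infty]$ on $\partial M$, and the classical result of Ozsv\'ath--Szab\'o gives $r = 2g(K)-1$ when $K$ is an L-space knot. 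So the first step is to spell out: $K$ admits a positive L-space surgery $\iff$ $\gamma_0(K)$ is (homotopic to) a line segment of slope exactly $2g-1$ in the cylinder (with no obstructing extra components). I would then record the elementary fact that $\boldsymbol f_{p,q}$ takes a straight line of slope $s$ in $p\cylinder$ to a straight line of slope $p^2 s$ in $\cylinder$ — here the vertical stretch by $p$ and horizontal compression by $p$ each contribute a factor of $p$, and the lattice-sliding map $\boldsymbol g_{p,q}$ preserves straightness of lines of the relevant slopes up to homotopy rel the (moved) punctures.

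The second step is the main computation. Assume $K$ is an L-space knot, so $\gamma_0(K)$ is a line of slope $2g-1 \ge 1$. Lift to $p\cylinder$: this is $p$ copies of $\gamma_0$ side by side, and I must check that, after the vertical staggering by $q$ between successive copies and reconnecting loose ends (steps (1)--(2) of the recipe), the resulting curve in $p\cylinder$ is still homotopic to a single straight line — this happens precisely when the slope is large enough that the staggered copies ``line up'' monotonically, i.e. when the vertical drop $q$ per horizontal unit $p$ is compatible with slope $(2g-1)$ running across a width-$p$ block; the clean way to phrase it is that $\boldsymbol f_{p,q}(\tilde\gamma_0)$ is homotopic to a straight line of slope $p^2(2g-1)/\text{(something)}$ exactly when $q/p \ge 2g-1$, and otherwise the reconnection forces $\gamma'_0$ to develop a genuine corner (an upward turn followed by a downward turn, or vice versa), which by the characterization above means $K_{p,q}$ is not an L-space knot. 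Concretely: by Theorem \ref{thm:main-cable}, $\gamma'_0 = \gamma_0(K_{p,q})$ is obtained by applying $\boldsymbol f_{p,q}$; the pushing of lattice points along lines of slope $q/p$ does not disturb the straight-line shape of $\gamma_0$ iff those push-lines are ``flatter than'' the segments of $\gamma_0$ they drag across, i.e. iff $q/p \ge 2g-1$. When this holds, $\gamma'_0$ is a straight line of positive slope, so $K_{p,q}$ is an L-space knot; when $q/p < 2g-1$, some lattice point is pushed past a strand of $\gamma_0$ in a way that creates a left arc of length $\ge 1$ together with a right excursion, so $\gamma'_0$ is no longer a line and $K_{p,q}$ admits no positive L-space surgery.

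For the converse direction — if $K$ does not admit a positive L-space surgery then neither does $K_{p,q}$ — I would argue contrapositively using the same picture: if $\gamma_0(K)$ is not homotopic to a straight line of positive slope, then it has a left arc of some length $\ge 1$ bounded by turns, and this feature survives (indeed gets stretched, as in the $\phi_i$ discussion around Proposition \ref{prop:phi-consequence}) under the three-step cabling recipe, so $\gamma'_0$ is not a line either; alternatively, if $\gamma_0(K)$ has positive slope but $\HFhat(M)$ carries an obstructing extra component, that component reappears (rescaled) in $\HFhat(M_{p,q})$ by Theorem \ref{thm:main-cable}. Finally I would assemble the slope bookkeeping: the L-space surgery interval for $K_{p,q}$ being nonempty is equivalent to the genus-$g_{p,q}$ inequality $2g_{p,q}(K_{p,q}) - 1 \ge 0$ reorganized via $g(K_{p,q}) = pg(K) + (p-1)(q-1)/2$ together with the slope-multiplication by $p^2$, and the condition that the staggering not create a corner distills exactly to $q/p \ge 2g-1$.

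\textbf{Main obstacle.} The hard part will be the geometric lemma in the second step: making precise, and proving, that $\boldsymbol f_{p,q}$ carries the straight-line $\gamma_0$ to something homotopic to a straight line \emph{if and only if} $q/p \ge 2g-1$, and carefully identifying which corner is created (and that it is a genuine, non-removable corner in the immersed-curve sense) when the inequality fails. This requires a careful analysis of how the lattice-point slides interact with a family of parallel strands of a fixed rational slope, and a homotopy argument (rel punctures) showing no cancellation of turns occurs — essentially the same sort of loop-calculus bookkeeping used for the $\phi_i$ formulas, specialized to the L-space (single straight strand) case, so I expect it to be technical but tractable given Theorem \ref{thm:main-cable}.
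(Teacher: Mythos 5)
Your overall route is the same as the paper's: characterize L-space knots by their immersed curves, apply Theorem \ref{thm:main-cable}, and decide when the transformed curve still satisfies the characterization. But the characterization you pivot on --- that $\gamma_0(K)$ is \emph{homotopic} to a line of rational slope $2g-1>0$ --- is false as stated: $\gamma_0$ wraps the cylinder exactly once and is homologous to $\lambda$, so it is never homotopic to a curve of nonzero slope. The correct statement (see \cite[Section 7.5]{HRW}, and this is what the paper uses) is that $K$ admits a positive L-space surgery if and only if $\HFhat(M)$ is a \emph{single} component which, apart from the segment wrapping around the cylinder, moves monotonically downward near the vertical axis when pulled tight; the wrap-around segment then has slope $2g-1$. (Your allowance of extra ``null-homotopic components that can be removed'' is also off: the characterization requires no extra components at all, and components of $\HFhat(M)$ are never null-homotopic in $\pplane$.)

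This matters because the mis-stated characterization is what makes your ``main obstacle'' look hard, and your plan for it (straight line goes to straight line of slope $p^2 s$; slide lines ``flatter than'' the dragged segments) would not go through as written --- the inequality is in fact reversed: the slide lines of slope $\tfrac qp$ must be at least as \emph{steep} as the slope-$(2g-1)$ wrap-around segments, and the wrap segment of $\gamma_0(K_{p,q})$ ends up with slope $2g(K_{p,q})-1$, not $p^2(2g-1)$. With the monotone-descent characterization the lemma you defer becomes immediate, and it is exactly the paper's one-line argument: after the lattice slide the descending portion of the $i$th copy of $\gamma_0$ occupies heights roughly between $1-g-\tfrac{iq}{p}$ and $g-\tfrac{iq}{p}$, so the $p$ copies are pairwise disjoint precisely when $\tfrac qp \ge 2g-1$, in which case the glued curve still descends monotonically and $K_{p,q}$ is an L-space knot; when $\tfrac qp < 2g-1$ these portions overlap, forcing backtracking (an upward-oriented arc near the axis) which obstructs any L-space surgery. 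The necessity direction is also simpler than your contrapositive: any upward-oriented point of $\HFhat(M)$ away from the wrap segment maps to an upward-oriented point of $\HFhat(M_{p,q})$, and any extra component of $\HFhat(M)$ produces extra components of $\HFhat(M_{p,q})$. So: right mechanism and right threshold, but the characterization and the slope bookkeeping need to be corrected first --- after which the ``hard part'' disappears.
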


  \begin{figure}
   \includegraphics[scale=1]{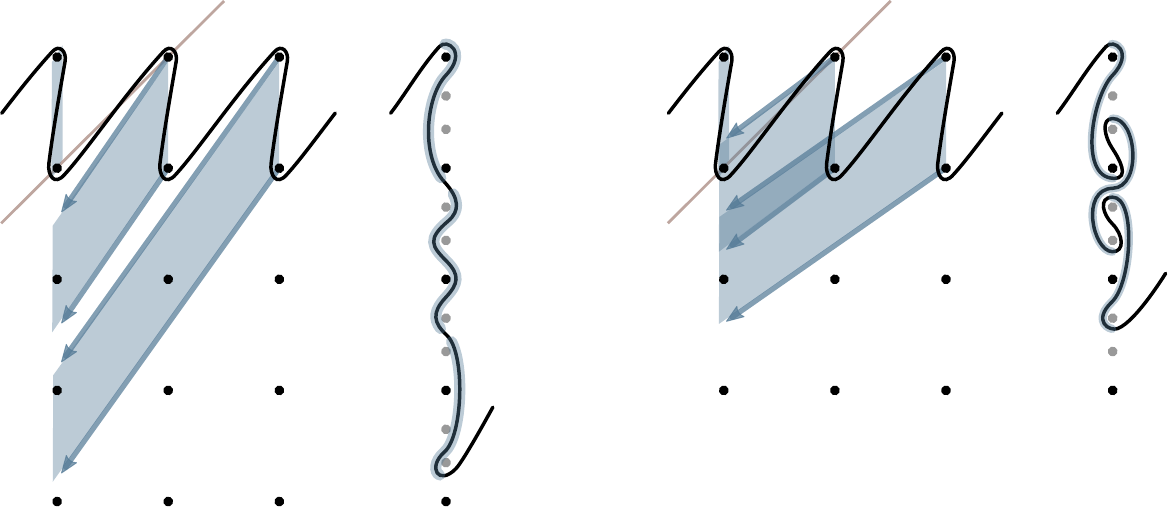}
   \caption{The the (3,4)-cable (left) and the (3,2)-cable (right) of the right hand trefoil.}
 \label{fig:L-space}
   \end{figure}

Note that $K$ admits a positive L-space surgery if and only if $\HFhat(M)$ is a single curve which, apart from the segment that wraps around the cylinder, moves monotonically downward in the neighborhood of the vertical axis (see \cite[Section 7.5]{HRW}). When this curve is pulled tight in the cylinder $\pcylinder$ (or in the plane $\pplane$), the slope of the non-vertical segment is $2g - 1$. Following Theorem \ref{thm:main-cable}, we construct $\HFhat(M_{p,q})$ from $p$ columns of the lift of $\HFhat(M)$ to $\pplane$ by translating lattice points along lines of slope $\tfrac q p$.

 \begin{proof}[A quick re-proof of Theorem \ref{thm:Lspace}]   
If $\HFhat(M)$ is oriented upward at any point apart from the nonvertical segment, it is clear the same will be true at the image of this point on $\HFhat(M_{p,q})$; thus $K$ having a positive L-space surgery is a necessary condition for $K_{p,q}$ to have one. Supposing $K$ has a positive L-space surgery, it is clear that if $\tfrac q p > 2g-1$ then the $p$ copies of the downward oriented portion of $\HFhat(M)$ miss each other, so that the resulting curve moves monotonically downward and $K_{p,q}$ has a positive L-space surgery. On the other hand, if $\tfrac q p < 2g-1$ then these sections of curves overlap, forcing some backtracking in the resulting curve, implying that $K_{p,q}$ has no L-space surgeries. An example is given in Figure \ref{fig:L-space}.
 \end{proof}

\subsection*{Cabling via tiling} From Theorem \ref{thm:main-cable} it is possible to interpret cabling in terms of plane tilings. That is, in a visual summary of the above discussion, we record the following:

\begin{figure}[t!]
\includegraphics[scale=1.25]{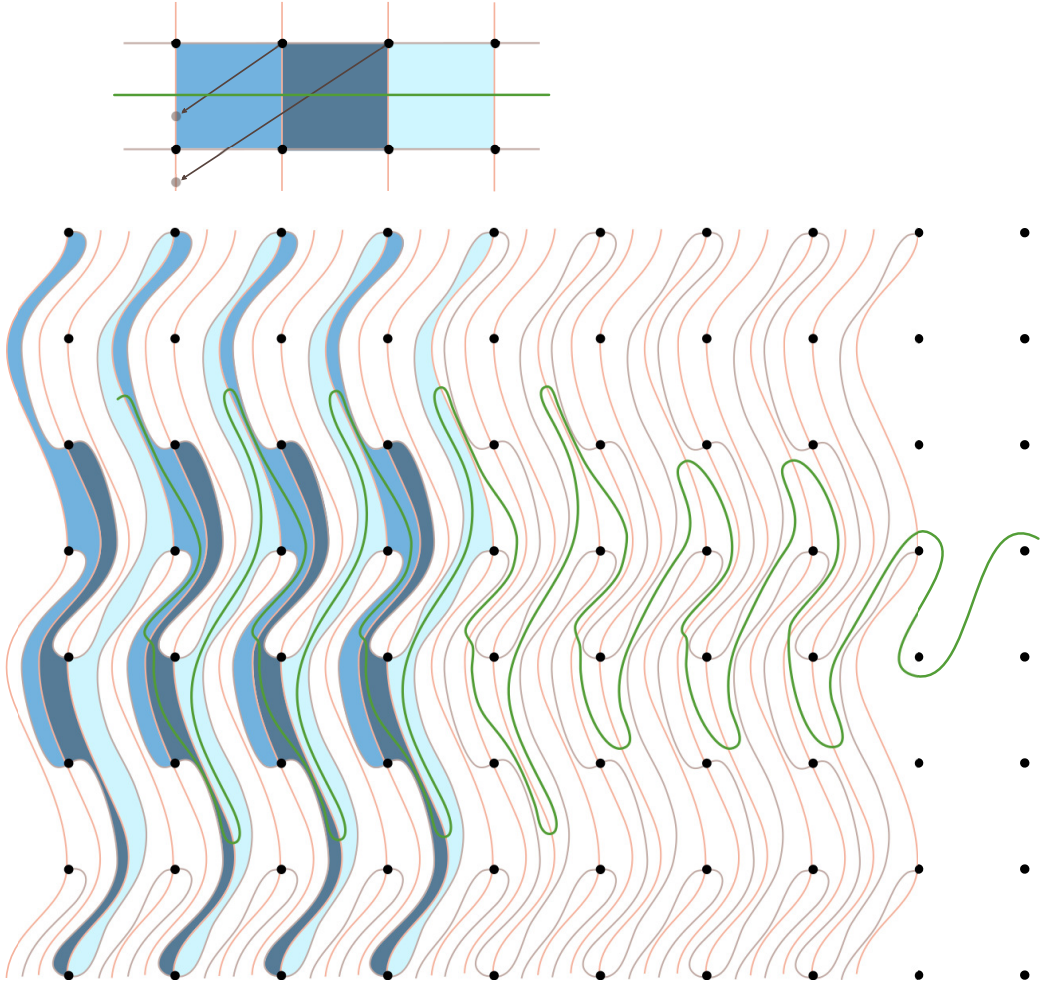}
\caption{The $(3,2)$-cabling operation interpreted as a plane tiling: 3 copies of the standard square tile (above) are carried to a new regular tile in $\mathcal{T}_{3,2}$ (below) under the operation $\boldsymbol f_{p,q}$ appearing in Theorem \ref{thm:main-cable}. To illustrate, the image of the longitude has been included (gradually homotoped to a simpler form moving righward), which recovers the invariant associated with the right-hand trefoil as expected.}\label{fig:intro-tiles}
\end{figure}

\begin{corollary}
For every relatively prime pair $(p,q)$ there is a periodic tiling $\mathcal{T}_{p,q}$ of the plane, unique up to lattice-fixing planar isotopy, so that $\curveset(K_{p,q})$ is the image of $\curveset(K)$ under the transformation taking the lattice $\widetilde{T}_\bullet$ to $\mathcal{T}_{p,q}$.
\end{corollary}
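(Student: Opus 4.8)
The strategy is to read the corollary off Theorem~\ref{thm:main-cable} with almost no extra work, once the objects are set up correctly. The tiling will be
\[\mathcal{T}_{p,q} \;:=\; \boldsymbol f_{p,q}\bigl(\text{the tiling of }\R^2\text{ by the rectangles }[kp,(k+1)p]\times[n,n+1]\bigr),\]
and the ``transformation taking $\widetilde{T}_\bullet$ to $\mathcal{T}_{p,q}$'' will simply be the homeomorphism $\boldsymbol f_{p,q}\colon\R^2\to\R^2$ itself. The first point to record is that the lift $\widetilde{\curveset}$ of $\curveset(K)$ to $\pplane$ is nothing but $\curveset(K)$ redrawn in the plane -- it is invariant under $\langle\lambda\rangle$, i.e.\ under horizontal translation by $1$ -- and likewise $\widetilde{\curveset}_{p,q}$ is $\curveset(K_{p,q})$ redrawn in the plane. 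Thus the planar statement $\widetilde{\curveset}_{p,q}\simeq\boldsymbol f_{p,q}(\widetilde{\curveset})$ of Theorem~\ref{thm:main-cable} already says precisely that $\boldsymbol f_{p,q}$, acting as a self-homeomorphism of $\R^2$ that takes $\Z^2$ bijectively to $\Z^2$, carries $\curveset(K)$ to $\curveset(K_{p,q})$.

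Next I would verify that $\mathcal{T}_{p,q}$ is a genuine periodic tiling. The content here is that $\boldsymbol f_{p,q}$ conjugates the translation subgroup $p\Z\times\Z$ to $\Z\times p\Z$: horizontal translation by $p$ becomes horizontal translation by $1$ after the horizontal compression by $p$, vertical translation by $1$ becomes vertical translation by $p$ after the vertical stretch by $p$, and the constant final vertical shift is irrelevant. For this to hold on all of $\R^2$, and not just on $\Z^2$, one needs the interpolation used to promote the lattice map $\boldsymbol g_{p,q}$ to a diffeomorphism to be chosen $p\Z\times\Z$-equivariantly -- which is possible because sliding along lines of slope $\tfrac{q}{p}$ until meeting $x\in p\Z$ is itself equivariant for these translations. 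Granting this, $\mathcal{T}_{p,q}$ is monohedral and periodic with translation lattice $\Z\times p\Z$, its prototile is the single ``regular tile'' $\boldsymbol f_{p,q}([0,p]\times[0,1])$ -- the image of a block of $p$ standard tiles -- and its $1$-skeleton contains $\boldsymbol f_{p,q}(\Z^2)=\Z^2$. Combined with the previous paragraph, this gives existence.

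For uniqueness, I would observe that the prototile, together with the position of the reference lattice inside it, is determined by $\boldsymbol f_{p,q}|_{\Z^2}$, which is fixed by the recipe for $\boldsymbol f_{p,q}$; any two admissible extensions of this lattice map to a diffeomorphism of $\R^2$ differ by an isotopy of $\R^2$ rel $\Z^2$, and such an isotopy alters neither the homotopy class of the curves rel punctures nor the isotopy class of the tiling. If one wants uniqueness among \emph{all} periodic tilings satisfying the stated curve property (and not merely among those produced by the construction), one argues in addition that the property -- in particular the requirement that $\gamma_0$ wrap around the cylinder exactly once -- forces any realizing homeomorphism to agree with $\boldsymbol f_{p,q}$ on $\Z^2$ up to that same ambiguity.

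The one place that genuinely needs care is the bookkeeping underlying the word ``tiling'' and the phrase ``the transformation taking the lattice $\widetilde{T}_\bullet$ to $\mathcal{T}_{p,q}$'', together with the verification that the non-linear fractional plane shear buried inside $\boldsymbol g_{p,q}$ can be, and is, chosen equivariantly so that the image of a periodic tiling is again periodic. Once that is pinned down, the corollary is a direct repackaging of Theorem~\ref{thm:main-cable}, and no further Heegaard Floer input is required.
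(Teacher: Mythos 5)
Your proposal is correct and takes essentially the same route as the paper: the paper's proof is exactly the observation that the image under $\boldsymbol f_{p,q}$ of $p$ horizontally aligned square tiles of $\widetilde{T}_\bullet$ gives the prototile of a periodic tiling, so the corollary is a direct repackaging of Theorem~\ref{thm:main-cable}. The extra care you take with periodicity (equivariance of the interpolation of $\boldsymbol g_{p,q}$) and with uniqueness up to lattice-fixing isotopy is sound but goes beyond what the paper bothers to spell out.
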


\begin{proof} This is a simple reformulation of Theorem \ref{thm:main-cable}: Consider the standard square tiling of the plane  $\widetilde{T}_\bullet$ defined by the preferred $(\mu,\lambda)$-framing. The image of $p$ square tiles aligned horizontally, under the  application of $\boldsymbol f_{p,q}$, gives a tile in a periodic tiling of the plane. 
\end{proof}

This is best illustrated in an example, and we have shown the tiling associated with $(3,2)$-cabling in Figure \ref{fig:intro-tiles}. Note that this point of view comes with a built-in sanity check: One can check that the image of a longitudinal curve under the transformation to $\mathcal{T}_{p,q}$ is the immersed curve $\curveset(T_{p,q})$. Recalling that, as a polynomial in $t$, the Alexander polynomial satisfies
\[\Delta(K_{p,q}) =\Delta(K)|_{t^p}\cdot \Delta(T_{p,q}) \] our formula has $\mathcal{T}_{p,q}$ playing the role of $\Delta(T_{p,q})$ in this formula while replacing $t$ with $t^p$ corresponds to the $p$ repeated copies of $\curveset(K)$.

In general, one expects bimodules in bordered Floer homology (for manifolds with two boundary tori) to be associated with 
Lagragians in $\ptorus \times \ptorus$. A simple first example of this is the bimodule associated with a diffeomorphism of the torus, where the (embedded) Lagrangian surface is the graph of the diffeomerophism. In that case we can interpret the action of the bimodule as follows: to compute the image of an immersed curve $\gamma$, we consider $\gamma \times \ptorus$, intersect with the Lagrangian surface, and project to the second coordinate. Cabling bimodules provide a first glimpse at how this construction might be generalized to arbitrary bimodules. The diffeomeorphism of the plane $\boldsymbol f_{p,q}$ does not descend to a diffeomorphism of torus, but since $\boldsymbol f_{p,q}$ is periodic and is determined by its effect on $p$ consecutive tiles of the plane it can be viewed as a $p$-valued function on $\ptorus$; that is, to each point in $\ptorus$ it associates an unordered tuple of $p$ points in $\ptorus$. The graph of this multivalued function is an (immersed) Lagrangian surface in $\ptorus \times \ptorus$, and the action of the bimodule on curves can be interpreted geometrically as before.

\section{Immersed curves and the merge operation}\label{sec:curves}


For any orientable manifold $M$ with torus boundary the Heegaard Floer homology $\HFhat(M)$ is an immersed multicurve in the marked torus $\partial M$ \cite{HRW}, as introduced above. This view of the Heegaard Floer invariants of $M$ arises from an interpretation of bordered Floer homology \cite{LOT}, and is closely related to the {\em loop calculus} introduced in \cite{HW}. This section builds a glossary between loop calculus \cite{HW} and immersed curves \cite{HRW}; in the former we developed the machinery for understanding gluing pairs of manifolds along essential annuli in their boundaries which we aim to interpret in terms of the immersed curves in the case of cabling knots in the three-sphere.  

\subsection{From puzzle pieces to curve segments} Assuming familiarity with some subset of \cite{HRW,HW,LOT}, we give a very terse summary of the bordered invariants in order to set up the desired glossary. 

\labellist \small
\pinlabel $\rho_2\rho_1=0$ at 78 13
\pinlabel $\rho_3\rho_2=0$ at 78 3
\pinlabel $\iota_0$ at -6 38 \pinlabel $\iota_1$ at 65 38
\pinlabel $\rho_1$ at 10 57
\pinlabel $\rho_2$ at 22 43
\pinlabel $\rho_3$ at 10 18
\endlabellist
\parpic[r]{
 \begin{minipage}{55mm}
 \centering
 \includegraphics[scale=1.0]{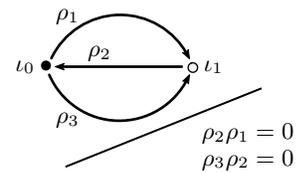}
 \captionof{figure}{The torus algebra $\Alg$ as the path algebra of a quiver with relations.}
 \label{fig:quiver}
  \end{minipage}%
  }
The torus algebra $\Alg$ is obtained as the path algebra of the quiver described in Figure \ref{fig:quiver}. Let $\sI\subset\Alg$ denote the subring of idempotents generated by $\iota_0$ and $\iota_1$. Working over the two-element field $\F$, a type D structure over $\Alg$ is a finite dimensional left $\sI$-module $V$ together with a map $\delta\co V\to\Alg\otimes_\sI V$. This map must satisfy a compatibility condition equivalent to ensuring that $\partial(a\otimes x)=a\cdot \delta(x)$ is a differential on the $\Alg$-module $\Alg\otimes_\sI V$. 

There is a simple interpretation of the above data in terms of decorated graphs: the vertices encode the generating set (these come in two types $\bullet$ and $\circ$, depending on the idempotents $\iota_0$ and $\iota_1$, respectively) and, by passing to type D structures that are reduced, the directed edges are labelled by the set $\{1,2,3,12,23,123\}$ in order to encode the coefficient maps; see Figure \ref{fig:curves-v-loops}. These graphs can be naturally immersed in the marked torus or, more precisely, in the once punctured torus with a fixed choice of 1-handle co-cores cutting the surface into a disk. In our case, these co-cores will always coincide with the preferred  $(\mu,\lambda)$-pair, since we are focused on knots in $S^3$. With this data in hand, we can decompose the torus into the familiar square patch with opposite edges identified. The type D structures of interest then are immersed train tracks (in the sense of Thurston \cite{Mosher2003}) where all of the vertices/switches lies on the horizontal or vertical edges; when such a train track comes from a three-manifold the classification theorem proved in \cite{HRW} tells us it is equivalent to an immersed multi-curve, possibly decorated with local systems, which we denote by $\HFhat(M)$ \cite{HRW}. 

\labellist \tiny
\pinlabel $a_{k}$ at 21 5
\pinlabel $b_{k}$ at 87 6
\pinlabel $c_{k}$ at 150 5
\pinlabel $d_{k}\!=\!\bar{c}_{\text{-}k}$ at 213 6
\pinlabel $e\!=\!\bar{c}_0$ at 276 5
\tiny  
\pinlabel $3$ at 17 18 \pinlabel $23$ at 27 42 \pinlabel $2$ at 19 80 
  \pinlabel $123$ at 83 18 \pinlabel $23$ at 78 42 \pinlabel $1$ at 86 80 
 \pinlabel $3$ at 146 18 \pinlabel $23$ at 154 42 \pinlabel $1$ at 152 80 
  \pinlabel $123$ at 212 18 \pinlabel $23$ at 217 42 \pinlabel $2$ at 209 80
   \pinlabel $12$ at 281 52.5  
\endlabellist
\begin{figure}[t!]
\includegraphics[scale=1.0]{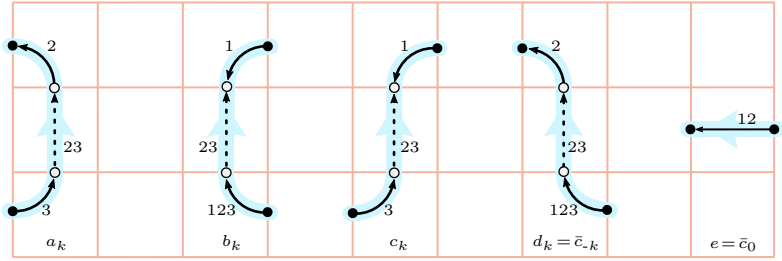}
\caption{Segments of immersed curve in the cover of the marked torus, labelled to be consistent with the puzzle pieces given in \cite{HW}. The integer subscript $k>0$ indicates the number of $\circ$ generators in the segment. These letters can appear forwards or backwards in a cyclic word, so that $\bar a_k$ runs against the direction indicated by the blue arrow. We can also extend our notation by setting $a_{-k} = \bar a_k, b_{-k} = \bar b_k$, $c_{-k} = \bar d_k$, $d_{-k} = \bar c_k$ and $d_0 = \bar c_0 = e$; note that then a segment with subscript $k$ moves upward $k$ units in the plane. 
}\label{fig:abcd}
\end{figure}

\labellist \small
\pinlabel $\mu$ at 2 81
\pinlabel $\lambda$ at 83 1
\tiny  
\pinlabel $z$ at 80 78
\pinlabel $1$ at 70 24 \pinlabel $1$ at 53 24 \pinlabel $23$ at 37 24 \pinlabel $2$ at 27 24
\pinlabel $3$ at 27 45 \pinlabel $3$ at 27 70 
  \pinlabel $123$ at 50 70
   \pinlabel $\boldsymbol 3$ at 113 84    \pinlabel $\boldsymbol 2$ at 113 12    \pinlabel $\boldsymbol 1$ at 184 12
 \endlabellist
\begin{figure}[h]
\includegraphics[scale=1.0]{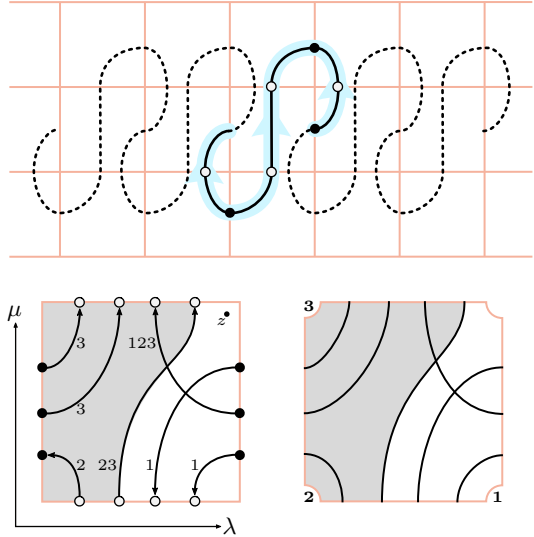}
\caption{Three different views of the invariant associated with the exterior of the right-hand trefoil. In all three cases, we have fixed the preferred $(\mu,\lambda)$ framing in order to present the torus boundary. On the lower left, the decorated graph describing the type D structure has been immersed in the marked torus as a train track. This description exhibits the redundancy in the edge labels: as shown in the lower right figure, the idempotents can be recovered from the horizontal and vertical edges while the coefficient maps are determined by which of the labelled corners are traversed by the curve segments (a region indicating the $23$ edge is shaded). Finally, lifting the curve to the cover $\overline{T}_\bullet$ (or, as pictured, $\pplane$) makes obvious the cyclic word $a_1\bar{c}_2b_1$, which in \cite{HW} is referred to as a {\em loop}.}\label{fig:curves-v-loops}
\end{figure}

In the case where the local systems are trivial, we recover the class of loop-type manifolds considered in our earlier work \cite{HW} (see also \cite[Section 1]{HRW-companion}). Central to this is the observation that, when the type D structure in question can be represented by a valence 2 graph, it is possible to decompose along $\bullet$ vertices into segments, each of which takes one of 5 possible forms as described in Figure \ref{fig:abcd} (compare \cite[Figure 1]{HW}).\footnote{There are certain exceptional type D stuctures that cannot be decomposed in this way, however these examples are not particularly important in this setting. The interested reader can consult \cite{HW} for a {\em dual} notation that decomposes along $\circ$ vertices.} As a result, studying these type D structures amounts to a calculus for manipulating cyclic words in the infinite alphabet $\mathfrak{A}=\{a_k,b_k,c_k,d_k, e\}$ for all positive integers $k$. The segments corresponding to these letters may appear backwards as we traverse a loop; this is indicated by a \emph {bar}. There are rules governed by the algebra restricting the letters that can be concatenated, which are most easily described by noting that each segment also corresponds to a segment of immersed curve as in Figure \ref{fig:abcd}: if two curve segments share an endpoint, they must lie on opposite sides of the vertical near that point. (In \cite{HW}, a puzzle piece convention is used to describe these rules.) Note that the $a_k$ and the $b_k$ correspond to the two types of stable chains introduced in \cite{LOT}, while $c_k$, $d_k$, and $e$ correspond to the three types of unstable chains. In fact, it makes sense to view the three types of unstable chain as part of a single family, and with this in mind we set $c_0 = \bar{e}$ and $c_{-k} = \bar{d}_k$. The example in Figure \ref{fig:curves-v-loops} explains this for the right-hand trefoil exterior.

\labellist \small
\pinlabel $\cdots$ at -7 80
\pinlabel $\cdots$ at -7 18
\pinlabel {$\left(\F^3, \left(\begin{smallmatrix}0&0&1\\1&0&0\\1&1&1\end{smallmatrix}\right)\right)$} at 44 48
\tiny
\pinlabel $x$ at 19 29.5 \pinlabel $y$ at 19 68 
\pinlabel $x_3$ at 81 32.5 \pinlabel $x_1$ at 107 32.5
\pinlabel $y_3$ at 81 64 \pinlabel $y_1$ at 107 64
\endlabellist
\parpic[r]{
 \begin{minipage}{55mm}
 \centering
 \includegraphics[scale=1.0]{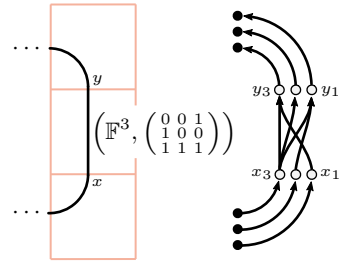}
 \captionof{figure}{A 3-dimensional local system, expanded at an $a_2$ to give a train track.}
 \label{fig:track-like}
  \end{minipage}%
  }
It is relatively straightforward to extend this language to cases admitting a non-trivial local system. Recall that each letter in $\mathfrak{A}$ corresponds to a (portion of a) type D structure that is a linear chain of arrows. We allow a letter in $\mathfrak{A}$ to be decorated by a local system, as follows. Let $V$ be a vector space over $\F$ of dimension $n$, and let $\Phi\co V\to V$ be an endomorphism. Decorating a segment with $(V, \Phi)$ amounts to taking $n$ parallel copies of the appropriate chain, with the $n$ parallel copies of any one arrow in the chain replaced with a collection of arrows determined by $\Phi$; see Figure \ref{fig:track-like} for an example. Suppose the relevant arrow in the chain connects generators (i.e. vertices) $x$ and $y$, with $x$ occurring first in the chain (i.e. there is an edge connecting $x$ to $y$). Fix bases $\langle x_1,\ldots,x_n\rangle$ and $\langle y_1,\ldots,y_n\rangle$ for $V$ and consider an $n\times n$ matrix over $\F$ representing $\Phi$, these arrows connect the $i$th copy of $x$ (i.e. $x_i$) to the $j$th copy of $y$ (i.e $y_j$) if and only if the $(i,j)$ entry of the matrix is nonzero (again, see Figure \ref{fig:track-like}). If all letters in a cyclic word carry a local system (each having the same dimension), then the local system on the cyclic word is determined by composing the endomorphisms. Note that a letter decorated by the trivial local system of dimension $n$ corresponds to $n$ parallel copies of the relevant curve segment. 

Now consider a component of $\HFhat(M)$, that is, an immersed curve $\gamma$ decorated with a local system $(V, \Phi)$ of dimension $n$. Following \cite{HRW}, we can interpret this as a \emph{curve-like train track}, which consists of $n$ parallel copies of $\gamma$ along with some additional edges that we may assume all lie on a portion of $\gamma$ corresponding to a single segment (that is, along one letter of $\mathfrak{A}$ as described above). When $M$ is the complement of a knot $K$ in $S^3$, we may in fact assume that these edges lie on a segment of type $a_k$; this is because the curve $\gamma_0(K)$ does not carry a nontrivial local system, and all other curves are closed in the lift to the plane $\pplane$ and thus must contain a type $a_k$ segment. The portion of the train track containing the extra edges is precisely a type $a_k$ segment with local system $(V, \Phi)$. These extra edges determine an $n\times n$ matrix over $\F$, where the $(i,j)$ entry is nonzero if the curve-like train track contains a copy of the $a_k$ segment from the $i$th copy of the initial generator in the segment to the $j$th copy of the final generator of the segment. By construction, this matrix represents the local system $\Phi$.

To summarize, given a knot $K$ in $S^3$, the invariant $\HFhat(M)$ is an immersed multi-curve $\curveset(K)=(\gamma_0,\gamma_1, \ldots,\gamma_n)$ where each $\gamma_{i>0}$ caries a (possibly trivial) local system. By the above discussion, we can assume that each component of $\curveset(K)$ is represented by a cyclic word in $\mathfrak{A}$, possibly with a nontrivial local system on a single $a_k$ segment. 
  

\subsection{The merge operation} Given type D structures $\boldsymbol\vartheta$ and $\boldsymbol\gamma$, we describe a new type D structure $\me(\boldsymbol\gamma,\boldsymbol\vartheta)$. This follows the notation set out in \cite{HW}, where we showed that this type D structure agrees with $\HFhat(\mathcal{M}(M_1,M_2))$ in the case where $\boldsymbol\gamma$ and $\boldsymbol\vartheta$ correspond to $\HFhat(M_1)$ and $\HFhat(M_2)$, respectively.\footnote{More specifically, treating $\boldsymbol\gamma$ and $\boldsymbol\vartheta$ as type D structures, in \cite{HW} we use $\me(\boldsymbol\gamma,\boldsymbol\vartheta)$ as a shorthand for the type D structure $\widehat{\mathit{CFDAA}}(\sP\times S^1)\boxtimes(\boldsymbol\gamma,\boldsymbol\vartheta)$, where $\widehat{\mathit{CFDAA}}(\sP\times S^1)$ is the bordered trimodule calculated in \cite{Hanselman2016} and the three-manifold $\sP\times S^1$ is a circle bundle over a pair of pants (this plays a key role in the next section).} The operations $\me$ and $\mathcal{M}$ are referred to as merges; the latter glues two manifolds along essential annuli in their torus boundaries. We will first describe the operator $\me$ algebraically, and then explain the gluing conventions for $\mathcal{M}$ in the next section in the context of cabling.

Some simplifications are possible in the present setting. First, we assume that $\boldsymbol\gamma$ is a loop consisting of only of $c_k$'s for integers $k$. This assumption holds in particular when $M_1$ is a solid torus, in which case $\mathcal{M}(M_1,-)$ will give rise to a cabled knot.\footnote{In fact, everything we do works in a much more general setting: Any manifold admitting L-space surgeries has a type D structure that, relative to a slope corresponding to an L-space filling, can be expressed in terms of only letters $c_k$. We have opted to simplify matters and focus on a well known construction with well-established conventions in order to illustrate the key principle. More general cases follow the same lines, and can be extracted from \cite{HRW,HW}.}  Further, as described above, we assume that $\boldsymbol\vartheta$ is represented as a curve-like train track. This may consist of several disjoint components, but we can restrict to connected train tracks without loss of generality: If $\boldsymbol\vartheta = (\vartheta_0,\ldots,\vartheta_n)$  then $\me(\boldsymbol\gamma,\boldsymbol\vartheta) =\left(\me(\boldsymbol\gamma,\vartheta_0),\ldots, \me(\boldsymbol\gamma,\vartheta_n)\right)$.


\labellist
  \pinlabel $\boldsymbol\vartheta$ at 19 149
    \pinlabel $\boldsymbol\gamma$ at 125 276
   \pinlabel $\lfloor\boldsymbol\gamma\rfloor$ at 225 35 
 \small
 \pinlabel $1$ at 251 360 
 \pinlabel $2$ at 282 360 
  \pinlabel $-1$ at 316 360 
    \pinlabel $1$ at 353 360 
    \pinlabel $2$ at 388 360 
  \pinlabel $-1$ at 417 360 
\tiny  \pinlabel $c_{1}$ at 123 235 \pinlabel $c_{2}$ at 155 235  \pinlabel $c_{\text{-}1}$ at 187 235
\pinlabel $c_{1}$ at 93 48 \pinlabel $b_{1}$ at 93 76 \pinlabel $c_{0}$ at 93 107  \pinlabel $a_{2}$ at 93 140  \pinlabel $c_{1}$ at 93 175  \pinlabel $c_{1}$ at 93 210
\pinlabel $c_{2}$ at 125.5 210 \pinlabel $c_{3}$ at 158.5 210 \pinlabel $c_{0}$ at 190 210
\pinlabel $c_{2}$ at 125.5 178 \pinlabel $c_{3}$ at 158.5 178 \pinlabel $c_{0}$ at 190 178
\pinlabel $a_{2}$ at 123 140 \pinlabel $a_{2}$ at 155.5 140 \pinlabel $a_{2}$ at 187 140
\pinlabel $c_{1}$ at 125.5 105 \pinlabel $c_{2}$ at 158.5 105  \pinlabel $c_{\text{-}1}$ at 191.5 105
 \pinlabel $b_{1}$ at 123 76 \pinlabel $b_{1}$ at 155 76 \pinlabel $b_{1}$ at 187 76
\pinlabel $c_{2}$ at 125.5 50 \pinlabel $c_{3}$ at 158.5 50 \pinlabel $c_{0}$ at 190 50
   \endlabellist
\begin{figure}[t]
\includegraphics[scale=0.9]{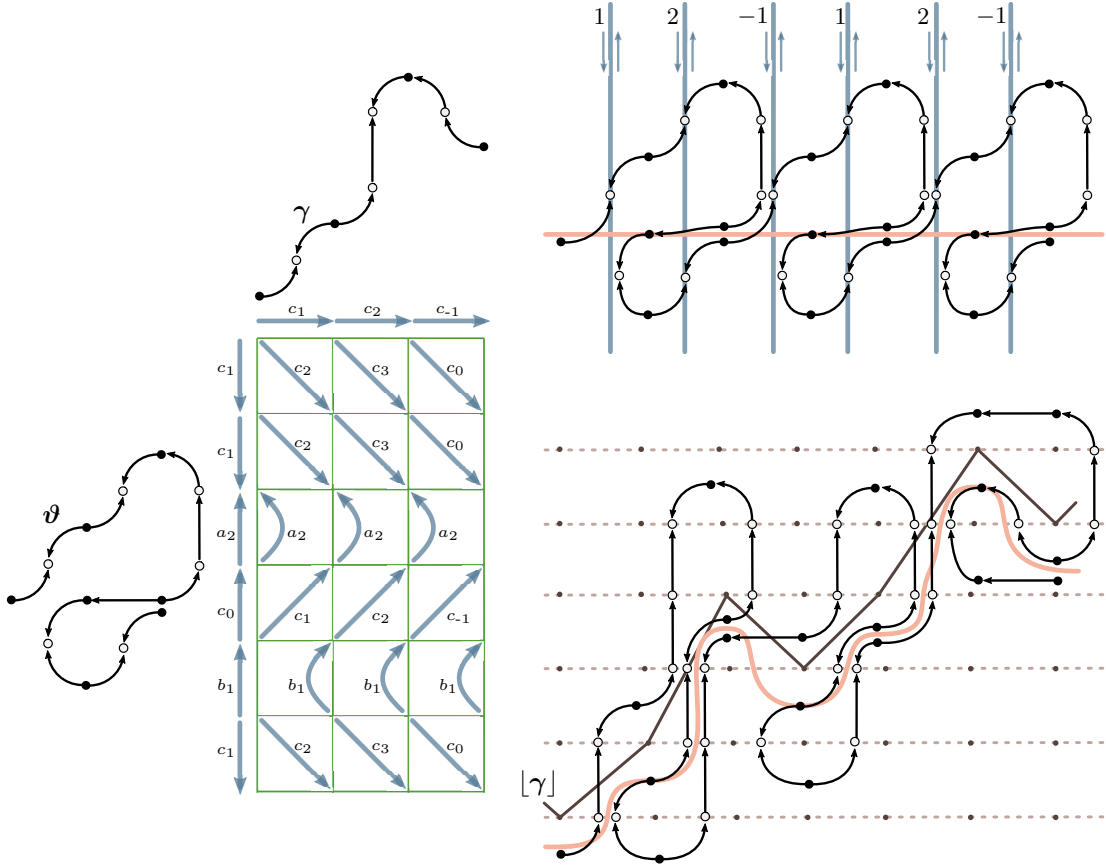}
\caption{Merging a pair of curves, as in Proposition \ref{prp:merge}, described graphically: On the left hand side of the diagram, the output curve is interpreted on a toroidal grid, where the $c_k$ from $\boldsymbol\gamma$ (written on the horizontal) act on the letters in $\boldsymbol\vartheta$ (written on the vertical). On the right, this process in interpreted in terms of curves, where the top right figure give a section of the (periodic) curve in $\pplane$ while the bottom right figure is the result of the merge. Note that the horizontal is moved to the {\em key curve} $\lfloor\boldsymbol\gamma\rfloor$.}\label{fig:merge}
\end{figure}

The main tool used in this paper is a distilled version of \cite[Proposition 6.4]{HW}:

\begin{proposition}\label{prp:merge}
Let $\boldsymbol\vartheta$ be a type D structure represented by a single cyclic word in $\mathfrak{A}$ and let $\boldsymbol\gamma$ be a word containing only $c_k$'s. If the local system on $\boldsymbol\vartheta$ is trivial then the type D structure $\me(\boldsymbol\gamma,\boldsymbol\vartheta)$ is obtained by applying the rules \[\me(a_j,c_k)=a_{j} \quad \me(b_j,c_k)=b_{j} \quad \me(c_j,c_k)=c_{j+k}\] to every letter in $\boldsymbol\vartheta$, ranging over all letters for $\boldsymbol\gamma$, and assembling the result together using a toroidal grid to match up the endpoints, as described in Figure \ref{fig:merge}. 
\end{proposition}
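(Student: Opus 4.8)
The plan is to deduce the proposition from \cite[Proposition 6.4]{HW}, which computes $\me(\boldsymbol\gamma,\boldsymbol\vartheta) = \widehat{\mathit{CFDAA}}(\sP\times S^1)\boxtimes(\boldsymbol\gamma,\boldsymbol\vartheta)$ for arbitrary loop-type inputs in the full generality of the alphabet $\mathfrak{A}$, and then to specialize to the hypotheses at hand. First I would recall that result in the form stated in \cite{HW}: for each ordered pair of letters, one drawn from $\boldsymbol\vartheta$ and one from $\boldsymbol\gamma$, it records an output segment together with the puzzle-piece data prescribing how that segment is glued to its neighbors. Since $\boldsymbol\gamma$ is here a cyclic word in the $c_k$'s, the only rules that can enter are those in which the $\boldsymbol\gamma$-letter has type $c$; reading these off gives exactly $\me(a_j,c_k)=a_j$, $\me(b_j,c_k)=b_j$, and $\me(c_j,c_k)=c_{j+k}$.

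Next I would check that these three rules cover every letter of $\boldsymbol\vartheta$. The letters $d_k$ and $e$, along with all reversed letters, are absorbed into the $c$-rule once we pass to the extended convention $c_0 = \bar e$ and $c_{-k}=\bar d_k$ introduced above: under this convention every unstable-type segment of $\boldsymbol\vartheta$, read in the appropriate direction, becomes $c_m$ for a unique $m\in\Z$, and one verifies directly from the \cite{HW} rules that $\me(c_m,c_k)=c_{m+k}$ persists for all $m\in\Z$, compatibly with reversal. The stable letters $a_j$ and $b_j$ (and their bars) are governed by the first two rules, which are likewise insensitive to the sign of the subscript. After this bookkeeping the content of the proposition reduces to its assembly clause.

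The assembly is the part to handle with care. Box-tensoring with $\widehat{\mathit{CFDAA}}(\sP\times S^1)$ produces, after reduction, a type D structure whose generators are indexed by the toroidal grid obtained from the product of the cyclic orderings of $\boldsymbol\gamma$ and of $\boldsymbol\vartheta$; this is why it is natural to arrange the output as in Figure \ref{fig:merge}, with the $c_k$'s of $\boldsymbol\gamma$ along the horizontal and the letters of $\boldsymbol\vartheta$ along the vertical. The per-letter rules above say which curve segment (equivalently, which letter of $\mathfrak{A}$, in the shorthand of Figure \ref{fig:abcd}) sits in each cell, and the idempotent/puzzle-piece matching inherited from \cite{HW} dictates how consecutive cells join: within a row one applies each $c_k$ of $\boldsymbol\gamma$ in turn to a fixed letter of $\boldsymbol\vartheta$, and passing between rows corresponds to stepping to the next letter of $\boldsymbol\vartheta$. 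Tracing around the grid then reassembles the result into a cyclic word, or in general a disjoint union of cyclic words, in $\mathfrak{A}$ --- which is precisely the recipe asserted in the statement, with the horizontal carried to the key curve $\lfloor\boldsymbol\gamma\rfloor$.

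Finally I would remark that triviality of the local system on $\boldsymbol\vartheta$ is exactly what allows the merge to be read off letter-by-letter without transporting auxiliary linear-algebraic data through the grid; the general case, where $\boldsymbol\vartheta$ is a curve-like train track carrying a local system supported on a single $a_k$ segment, follows by the same argument with the endomorphism $\Phi$ carried along that segment, as in \cite{HW}. The main obstacle is not new mathematics but the faithful translation of conventions between the puzzle-piece formalism of \cite{HW} and the segment-and-grid language used here --- in particular, pinning down the idempotent-matching rule precisely enough that the toroidal-grid assembly of Figure \ref{fig:merge} is unambiguous, together with the sign-of-subscript consistency checks needed to fold the $d_k$ and $e$ cases into the single rule $\me(c_j,c_k)=c_{j+k}$.
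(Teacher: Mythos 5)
Your proposal is correct and follows essentially the same route as the paper: the paper treats this proposition as a distilled special case of \cite[Proposition 6.4]{HW} and simply notes that the proof is contained there, which is exactly your strategy of specializing that result to $\boldsymbol\gamma$ built from $c_k$'s, folding the $d_k$, $e$, and barred letters into the extended $c$-convention, and reading off the toroidal-grid assembly. Your closing remarks about nontrivial local systems go beyond the statement at hand and correspond to the paper's separate extension (Proposition \ref{prp:merge-general}), where the check is carried out along an $a_k$ segment as in \cite[Figures 10, 11, and 12]{HW}.
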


The proof of this result is contained in \cite{HW}, however because non-trivial local systems are not handled there, we want to be precise about how to extend the result based on the material in our earlier work. 

\begin{proposition}\label{prp:merge-general}
Proposition \ref{prp:merge} holds when $\boldsymbol\vartheta$ carries a local system, where for each letter $u$ in the word representing $\boldsymbol\vartheta$ and each $c_k$ in the word representing $\boldsymbol\gamma$, $\me(u,c_k)$ carries the same local system as $u$.
\end{proposition}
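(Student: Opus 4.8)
The plan is to reduce to the local-system-free case treated in \cite[Proposition 6.4]{HW} by viewing a local system as a bundle of parallel strands and verifying that the merge operation acts on these strands independently. Recall that $\me(\boldsymbol\gamma,\boldsymbol\vartheta)$ is by definition the type D structure $\widehat{\mathit{CFDAA}}(\sP\times S^1)\boxtimes(\boldsymbol\gamma,\boldsymbol\vartheta)$, and that this box tensor product is computed locally: the structure maps of the output are produced by feeding the sequences of $\Alg$-labels appearing along $\boldsymbol\gamma$ and $\boldsymbol\vartheta$ into the fixed operations of the trimodule. In particular, as recorded in Proposition \ref{prp:merge}, the result is assembled on a toroidal grid by applying the elementary rules $\me(a_j,c_k)=a_j$, $\me(b_j,c_k)=b_j$, $\me(c_j,c_k)=c_{j+k}$ to each segment of $\boldsymbol\vartheta$ separately and then matching up endpoints.

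First I would recall, following the discussion of curve-like train tracks above and \cite{HRW}, that after an isomorphism of the train track we may assume the local system $(V,\Phi)$ of dimension $n$ on $\boldsymbol\vartheta$ is concentrated on a single segment of type $a_k$: the train track then consists of $n$ parallel copies of the underlying cyclic word in $\mathfrak{A}$, with the bundle of $n$ parallel arrows along one arrow of that $a_k$ segment replaced by arrows recording a matrix for $\Phi$. Next I would observe that the operations of $\widehat{\mathit{CFDAA}}(\sP\times S^1)$ see only the $\Alg$-module structure of $\boldsymbol\vartheta$, which is identical on each of the $n$ parallel strands, and that the ``twisting'' arrows recording $\Phi$ carry exactly the same algebra labels as the parallel arrows they replace. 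Hence $\boxtimes$ produces no structure maps mixing distinct strands, and $\me(\boldsymbol\gamma,\boldsymbol\vartheta)$ is obtained from $n$ parallel copies of $\me(\boldsymbol\gamma,w)$, where $w$ is the underlying cyclic word, by replacing a bundle of parallel arrows on the image segment --- again of type $a_k$, since $\me(a_k,c_j)=a_k$ --- with arrows recording the same matrix. This is exactly the statement that $\me(\boldsymbol\gamma,\boldsymbol\vartheta)$ carries the local system $(V,\Phi)$, and since the rules leave each letter $u$ of $\boldsymbol\vartheta$ on a segment of the same type, the per-letter version of the claim follows as well.

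The main obstacle is the step asserting that the box tensor product with $\widehat{\mathit{CFDAA}}(\sP\times S^1)$ introduces no strand-mixing structure maps along the local-system segment. To justify this one must return to the explicit trimodule computed in \cite{Hanselman2016} and reinterpreted in \cite{HW}, and check that every sequence of $\Alg$-inputs contributing to the merge of an $a_k$ segment yields an output arrow whose source and target strand indices coincide with those of the input arrow --- in other words, that the combinatorial matching underlying Proposition \ref{prp:merge} is insensitive to the strand index. Once this is confirmed, composing the per-letter endomorphisms around the loop recovers the local system on $\me(\boldsymbol\gamma,\boldsymbol\vartheta)$. (Equivalently, one may phrase the entire argument as the naturality of $\boxtimes$ with respect to tensoring $\boldsymbol\vartheta$ by the ``twisted coefficient system'' $(V,\Phi)$, but this reduces to the same verification.)
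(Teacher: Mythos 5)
Your proposal is correct and follows essentially the same route as the paper: both arguments concentrate the nontrivial local system on a single $a_k$ letter (legitimate here since any component with a nontrivial local system contains an $a_k$), handle trivial local systems as parallel copies via Proposition \ref{prp:merge}, and reduce the general case to re-running the explicit merge computation coming from the trimodule (\cite[Figures 10, 11, and 12]{HW}) with the simple segment replaced by one carrying an arbitrary local system, checking that the strand connectivity encoding $\Phi$ passes through unchanged. The paper likewise records this last check only as a straightforward computation, so your deferral of it matches the intended level of detail.
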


\begin{proof}
There is nothing to check for trivial local systems, as these are just disjoint copies of some curve and Proposition \ref{prp:merge} applies. For a nontrivial local system, we need to carry out the computation in \cite[Figures 10, 11, and 12]{HW}, replacing the simple segments for the $\boldsymbol\ell_2$ input with segments carrying an arbitrary local system as in, for example, Figure \ref{fig:track-like}. This is a straightforward computation. Note that we do not need to check this computation for type $b_k$ pieces, since we may assume a local system on a loop is concentrated on any one letter and a loop containing a $b_k$ must also contain an $a_j$. (Furthermore if $\boldsymbol\vartheta$ is the multicurve corresponding to a knot in $S^3$, as in this paper, it is enough to check the computation for $a_k$ pieces since any component with a nontrivial local system must contain an $a_k$ piece.)
\end{proof}



 The takeaway from Proposition \ref{prp:merge} (and its extension to non-trivial local systems in Proposition \ref{prp:merge-general}) is a graphical calculus used to determine the merge of two curves when one contains only $c_k$ segments; this is the content of Figure \ref{fig:merge}. Consider a word (in the $c_k$) representing $\boldsymbol\gamma$, and write this along the top of a rectangular grid; consider a word in $\mathfrak{A}$ representing $\boldsymbol\vartheta$, and write this along the side of the rectangular grid. Then following the letter-by-letter instructions in Proposition \ref{prp:merge}, the new word $\me(\boldsymbol\gamma,\boldsymbol\vartheta)$ can be obtained by running through the grid, starting at the top left. As shown in Figure \ref{fig:merge}, the $\me(c_j,c_k)=c_{j+k}$ run diagonally, while the $\me(a_j,c_k)=a_{j}$ and $\me(b_j,c_k)=b_{j}$ change direction. The sides of this grid are identified to form a toroidal grid, and this connects up the endpoints of the segments to form the new loop $\me(\boldsymbol\gamma,\boldsymbol\vartheta)$.

 \labellist \small
\pinlabel $\lceil\boldsymbol\gamma\rceil$ at -8 35
\pinlabel $\boldsymbol\gamma$ at -5 19
\pinlabel $\lfloor\boldsymbol\gamma\rfloor$ at -8 1
\endlabellist
\parpic[r]{
 \begin{minipage}{45mm}
 \centering
 \includegraphics[scale=0.80]{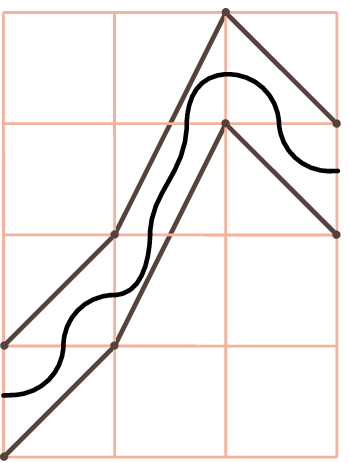}
 \captionof{figure}{The PL {\it key curves} approximating $\curveset$.}
 \label{fig:floor-and-ceiling}
  \end{minipage}%
  }
 Note that as the resulting loop is traversed, horizontal motion in the grid corresponds exactly to horizontal motion of the corresponding curve in the plane. In particular, each vertical line of lattice points in the plane corresponds to a column of the grid containing some $c_k$ in $\boldsymbol\gamma$, and the effect of merging on the curve for $\boldsymbol\vartheta$ is to shear the plane along that vertical line by $k$.

The graphical shorthand from Figure \ref{fig:merge} suggests an interpretation of the merge operation in terms of immersed curves, which we can think of as $\boldsymbol\gamma$ acting on $\boldsymbol\vartheta$. To describe this,  it is useful to have a piecewise-linear representative of the curve $\boldsymbol\gamma$. Let $\boldsymbol\gamma$ be expressable as a word in only the $c_k$ so that, viewed in the plane $\pplane$, $\boldsymbol\gamma$ is a graph.  Let $\lfloor\boldsymbol\gamma\rfloor$ be the curve consisting of linear segments that, at each integer in the horizontal direction, intersects the lattice point immediately below $\boldsymbol\gamma$. The curve $\lceil\boldsymbol\gamma\rceil$ is defined similarly, by instead  pushing up to the lattice points immediately above $\boldsymbol\gamma$; see Figure \ref{fig:floor-and-ceiling}. Note that this is closely related to the {\em pegboard diagrams} introduced  in \cite{HRW}.  

Recall that the immersed multicurve $\HFhat(M)$ coming from a bordered 3-manifold $M$ lives in the cylinder $(\R^2 \setminus \Z^2 )/\langle \lambda \rangle$, where $\lambda$ corresponds to the homological longitude of $M$. Equivalently, we think of this as a multicurve in $\R^2\setminus\Z^2$ which is invariant under the action of $\lambda$. We will say that such a curve has \emph{horizontal period $p$} if translation by $\lambda$ moves $p$ units in the horizontal direction.

\begin{corollary}\label{crl:recipe}
Let $\boldsymbol{\vartheta}$ be an immersed multicurve with local systems in $\pplane$ with horizontal period $p$, and let $\boldsymbol\gamma$ be a curve $\pplane$ with no vertical tangent lines (i.e. the graph of a function) with horizontal period $q$, with $p$ and $q$ relatively prime. Then the immersed multicurve for $\me(\boldsymbol\gamma,\boldsymbol\vartheta)$ is obtained by adding $\lfloor \boldsymbol{\gamma} \rfloor$ to $\boldsymbol{\vartheta}$ vertically. That is we find the image $\boldsymbol{\vartheta}$ under the transformation of $\pplane$ which translates along each vertical line to take the horizontal axis to $\lfloor \boldsymbol{\gamma} \rfloor$.
\end{corollary}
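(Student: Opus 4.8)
The plan is to derive Corollary \ref{crl:recipe} directly from the graphical calculus of Proposition \ref{prp:merge} (together with its local-system extension, Proposition \ref{prp:merge-general}), by interpreting each column of the toroidal grid as a vertical shear of the plane. First I would fix a word in the $c_k$ representing $\boldsymbol\gamma$; since $\boldsymbol\gamma$ is the graph of a function with horizontal period $q$, reading off the word amounts to recording, in each of the $q$ unit-width horizontal strips of a fundamental domain, the net vertical displacement $k$ of $\boldsymbol\gamma$ across that strip. Similarly I would fix a word in $\mathfrak{A}$ representing $\boldsymbol\vartheta$; because $\boldsymbol\vartheta$ has horizontal period $p$ and $\gcd(p,q)=1$, the toroidal grid in Figure \ref{fig:merge} has $p$ columns and the word for $\boldsymbol\vartheta$ wraps around it $q$ times (equivalently, lift to $p\torus$ or just work with the $pq$-periodic picture in $\pplane$), so every letter of $\boldsymbol\vartheta$ is acted on by every $c_k$ in $\boldsymbol\gamma$ exactly as the proposition requires.

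Next I would make precise the sentence already flagged in the text: ``horizontal motion in the grid corresponds exactly to horizontal motion of the corresponding curve in the plane,'' so that a column of the grid carrying a $c_k$ from $\boldsymbol\gamma$ is exactly the preimage of one vertical line of lattice points, and the rule $\me(c_j,c_k)=c_{j+k}$, $\me(a_j,c_k)=a_j$, $\me(b_j,c_k)=b_j$ says that crossing that line adds $k$ to the vertical displacement of $\boldsymbol\vartheta$ while leaving everything to either side of the line unchanged. In other words, the merge operation replaces $\boldsymbol\vartheta$ by its image under the transformation of $\pplane$ that shears vertically along the $x=n$ line by the integer $k_n$ recorded by $\boldsymbol\gamma$ in the strip to the right. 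Accumulating these shears from some chosen basepoint, the total vertical displacement applied at horizontal position $x$ is exactly the height of $\lfloor\boldsymbol\gamma\rfloor$ at $x$: at integer $x$ the floor curve sits on the lattice point immediately below $\boldsymbol\gamma$, and between consecutive integers it interpolates linearly, matching the grid picture (each $c_k$ contributes its full drop/rise across one strip). So the transformation is precisely ``translate along each vertical line to carry the horizontal axis to $\lfloor\boldsymbol\gamma\rfloor$,'' which is the assertion.

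I would then handle two bookkeeping points. First, the choice of floor rather than ceiling: the convention in Figure \ref{fig:abcd}/\ref{fig:merge} puts the switches of the train track on the lattice lines in a way that makes the $c_k$ segments pass below the lattice points they are indexed against, so the reassembled curve follows $\lfloor\boldsymbol\gamma\rfloor$ rather than $\lceil\boldsymbol\gamma\rceil$; one checks this on a single $c_k$ against a single $a_j$, $b_j$, or $c_j$ segment, exactly the local computation underlying Proposition \ref{prp:merge}. Second, the local systems: by Proposition \ref{prp:merge-general} the rule $\me(u,c_k)$ carries the same local system as $u$, so the shearing transformation acts on the curve-like train track of $\boldsymbol\vartheta$ componentwise without disturbing the extra edges encoding $\Phi$; hence ``the image of $\boldsymbol\vartheta$'' is the image as a decorated multicurve, and the output is homotopic to an honest immersed multicurve with the same local systems.

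The main obstacle I expect is not the shearing interpretation itself but making rigorous the passage between the combinatorial grid and the plane when $p$ and $q$ are genuinely coprime and both $>1$: one must check that wrapping the $\boldsymbol\vartheta$-word $q$ times around a $p$-column toroidal grid produces a connected curve whose homotopy class in $\pplane$ (equivalently in the cylinder $\pcylinder$ after quotienting by the new longitude) is independent of the chosen words and of the starting corner, and that the accumulated shears genuinely assemble into a well-defined transformation of $\pplane$ rather than just a cut-and-reglue recipe. This is really a statement that the sequence of elementary vertical shears along the lines $x=n$ commutes appropriately and telescopes to the map determined by $\lfloor\boldsymbol\gamma\rfloor$; once that is said carefully, invariance follows because $\lfloor\boldsymbol\gamma\rfloor$ depends only on $\boldsymbol\gamma$ as a curve, and the remaining content is exactly Proposition \ref{prp:merge} applied letter by letter.
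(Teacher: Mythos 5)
Your proposal is correct and takes essentially the same route as the paper: the proof there likewise reads Corollary \ref{crl:recipe} off the grid calculus of Propositions \ref{prp:merge} and \ref{prp:merge-general}, identifying each column of the toroidal grid with a vertical line of lattice points and the column-dependent shifts of the type $c$ indices with vertical shears whose accumulated effect is to carry the horizontal axis to $\lfloor\boldsymbol\gamma\rfloor$. The only details the paper spells out that you leave implicit are the two assembly cases---a component of $\boldsymbol\vartheta$ homologous to the longitude traces out the whole grid as a single word of horizontal period $pq$, while a nullhomologous component yields several disjoint cyclic words matching its consecutive lifts in $\pplane$---together with some $p$-versus-$q$ bookkeeping on which your write-up (like, in fairness, the paper's own proof relative to the corollary's statement) is loose.
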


\begin{proof}
This is the main thrust of Figure \ref{fig:merge}: For a component homologous to $\lambda$, the new cyclic word moves $q$ columns to the right in the grid each time it traverses the grid vertically. Since $p$ and $q$ are relatively prime the new word makes $p$ vertical passes, tracing out the entire grid, before returning to the starting point. The new word is $p$ copies of the word representing $\boldsymbol\vartheta$, with the indices on type $c$ letters shifted according to the column in the grid; this corresponds to $p$ copies of the fundamental region in $\boldsymbol{\tilde\vartheta}$, each of which moves $q$ units to the right, with a plane shear applied along each column of lattice points. The magnitude of each shear is determined by the index of the corresponding letter in $\boldsymbol\gamma$, which amounts to shifting each column upwards by the height of $\lfloor \boldsymbol{\tilde\gamma} \rfloor$ in that column. The resulting curve has horizontal period $pq$. For a nullhomologous component, the grid gives rise to $p$ separate cyclic words, each traversing the grid vertically once starting in a different column. Each word is a copy of $\boldsymbol\vartheta$ with shifted indices on type $c$ letters. The nullhomologous component of $\boldsymbol\vartheta$ lifts to infinitely many copies of the same closed curve in $\boldsymbol{\tilde\vartheta}$, which are translations of each other by multiples of $\lambda$. Taking $p$ consecutive copies corresponds to the $p$ cyclic words in the grid, and adding $\lfloor \boldsymbol{\tilde\gamma} \rfloor$ corresponds to the required shifts in indices.
\end{proof}

As mentioned previously, the setup of Corollary \ref{crl:recipe} is more general than we need for cabling; we will only need the case that $\boldsymbol\gamma$ is in fact a straight line of some rational slope. Note that when $\boldsymbol\gamma$ is a line of slope 1, the transformation taking the horizontal axis to $\lfloor \curveset \rfloor$ is a lift to $\pplane$ of a Dehn twist in $\ptorus$. This is a linear transformation of the plane, which we refer to as a \emph{plane shear} in the vertical direction. The case that $\boldsymbol\gamma$ is a line of rational slope is a mild generalization of this, which we call a \emph{fractional plane shear}.


\section{The proof of Theorem \ref{thm:main-cable}}\label{sec:proof}


In order to complete the proof of our theorem, we need to connect the operation described in Corollary  \ref{crl:recipe} to the specific context of cabling. To do this we first set our conventions. 

\subsection{Cabling conventions}
Recall that, fixing a knot $K$, we let $M$ denote the complement $S^3 \setminus \nu(K)$ and $M_{p,q}$ denote the complement of the cable $C_{p,q}(K)$. Let $\sP$ denote a two-sphere with three disks removed (so that $\sP$ is homeomorphic to a pair of pants). The manifold $M_{p,q}$ can be obtained by gluing $M$ into one boundary component of $\sP \times S^1$ and an appropriately framed solid torus $D^2 \times S^1$ into another boundary component. We will briefly review this construction, paying particular attention to framing conventions.

Each torus boundary in this construction has a natural choice of parametrizing curves. For $\partial M$, we use a meridian $\mu$ and the Seifert longitude $\lambda$, fixing orientations on these curves with the convention that $\lambda \cdot \mu = +1$. For $\partial(D^2 \times S^1)$, we let $m$ be a meridian $\partial D^2 \times \{\pt\}$ and let $\ell$ be the longitude $\{\pt\}\times S^1$, with the orientation convention that $m\cdot\ell = +1$. For $i \in \{1,2,3\}$, the $i$th boundary component of the $S^1$-bundle $\sP \times S^1$ is parametrized by a fiber $f_i = \{\pt\}\times S^1$ and $b_i = \partial_i(\sP)\times\{\pt\}$, where $\partial_i(\sP)$ denotes the $i$th boundary component of the base surface $\sP$. We set orientations on these curves so that $b_i\cdot f_i = +1$.

The third boundary component of $\sP \times S^1$ will ultimately become the boundary of $M_{p,q}$, however it is helpful for the moment to fill this third boundary component in with a solid torus in a trivial way so that $\sP \times S^1$ becomes $A \times S^1$, where $A$ is an annulus. This solid torus can be removed later by deleting a neighborhood of a fiber of $A \times S^1$. We glue $D^2 \times S^1$ to the first boundary component of $\sP \times S^1$ (now $A \times S^1$) such that $f_1$ is identified with $p\ell + q m$; this means that $b_1$ is identified with $r\ell + s m$ for some integers $r,s$ with $ps-qr = -1$ (we can choose $r$ and $s$ arbitrarily subject to this condition, but the choice effects the framings on the resulting boundary components). The result of this gluing is a solid torus, equipped with a Seifert fibration in which the core of the solid torus is a singular fiber and the regular fibers wind $p$ times longitudinally and $q$ times meridionally. This solid torus is glued to the knot complement $M$ such that the result is $S^3$ and the core of the solid torus is identified with $K$. As a result, a regular fiber of $D^2 \times S^1 \cup A \times S^1$ is the cable $C_{p,q}(K)$, and removing a neighborhood of one of these (or, equivalently, not filling in the third boundary of $\sP\times S^1$), yields the complement $M_{p,q}$. 

Note that inserting $A \times S^1 \cong T^2 \times [0,1]$ between $M$ and $D^2\times S^1$ amounts to a change of framing and, in particular, $f_1$ can be identified with $f_2$ and $b_1$ can be identified with $-b_2$. To recover $S^3$, we want $\mu$ to be identified with $m$ and $\lambda$ to be identified with $\ell$. It follows that $f_2$ glues to $p\lambda + q\mu$ and $b_2$ glues to $-r\lambda - s\mu$. To summarize, we have
$$ \left[ \begin{matrix} f_1 \\ b_1 \end{matrix} \right] = \left[ \begin{matrix} p & q \\ r & s \end{matrix} \right] \left[ \begin{matrix} \ell \\ m \end{matrix} \right] \qquad
\left[ \begin{matrix} f_2 \\ b_2 \end{matrix} \right] = \left[ \begin{matrix} p & q \\ -r & -s \end{matrix} \right] \left[ \begin{matrix} \lambda \\ \mu \end{matrix} \right]$$
with $ps-qr = -1$. Inverting these matrices:
$$\left[ \begin{matrix} \ell \\ m \end{matrix} \right] = \left[ \begin{matrix} -s & q \\ r & -p \end{matrix} \right] \left[ \begin{matrix} f_1 \\ b_1 \end{matrix} \right] \qquad
\left[ \begin{matrix} \lambda \\ \mu \end{matrix} \right] = \left[ \begin{matrix} -s & -q \\ r & p \end{matrix} \right] \left[ \begin{matrix} f_2 \\ b_2 \end{matrix} \right]$$
If we do not fill in the third boundary of $\sP\times S^1$ in the construction above the resulting manifold with torus boundary is $M_{p,q}$; it is clear that the meridian $\mu_C$ of the cable knot is given by $b_3$. While not required, as will follow from the computation below, one can check that the Seifert longitude $\lambda_C$ of the cable knot is given by $-f_3 + q^2 b_3$. 

\subsection{Applying the merge operation}
We are interested in obtaining the immersed curve-set $\HFhat(M_{p,q})$ from the immersed curve-set $\HFhat(M)$. We can do this by applying the merge operation to $\HFhat(M)$ and $\HFhat(D^2 \times S^1)$, keeping in mind the framings discussed above. Following Section \ref{sec:curves} and the conventions in \cite{HW}, the first step is to draw lifts of both curve-sets in the plane with respect to the parametrization by $f_i$ and $b_i$ (or, more precisely, by the curves in $\partial(D^2\times S^1)$ or $\partial M$ which are identified with $f_i$ and $b_i$), such that $b_i$ is the horizontal direction and $f_i$ is the vertical direction. Recall our convention is that $b_i \cdot f_i = 1$, but we are now considering the plane as a lift of $\partial(D^2\times S^1)$ or of $\partial M$, which are identified with boundary tori of $\sP\times S^1$ by an orientation reversing diffeomorphism, so if we take $b_i$ to be the positive horizontal axis then $f_i$ is the negative vertical axis. Note that we could instead choose the opposite orientation for both axes, but this ambiguity can be ignored since immersed curves for bordered invariants are symmetric under the elliptic involution of the torus by \cite[Theorem 7]{HRW-companion}. Since $\HFhat(D^2 \times S^1)$ is the meridian $m = rf_1 - pb_1$, this curve is simple to describe in the relevant basis: it is a line of slope $\tfrac r p$ \cite{HRW}. The second step is to apply Corollary \ref{crl:recipe} by taking the vertical sum of $\HFhat(M)$ and $\lfloor m \rfloor$; note that the conditions of the Corollary are satisfied because components $\HFhat(M)$ are homologous to zero or to the rational longitude $\lambda$, which moves horizontally by $q$ units, while $m$ moves horizontally be $p$ units. The result is (a lift to the plane of) $\HFhat(M_{p,q})$, though given with respect to the framing $(f_3, b_3)$ rather than the usual $(\mu_C, \lambda_C)$; see Figure \ref{fig:proof-merge-framing}.

\begin{figure}
\labellist 
\pinlabel {\color{darkBlue} $b$} at 100 271
\pinlabel {\color{lightBlue} $f$} at 75 245

\pinlabel {\color{lightPeach} $\mu$} at 133 245
\pinlabel {\color{darkPeach} $\lambda$} at 60 271

\pinlabel {\color{lightGreen} $m$} at 28 245

\pinlabel {\color{lightGreen} $m$} at 166 130
\pinlabel {\color{darkGreen} $\floor{m}$} at 165 110
\pinlabel {\color{darkBlue} $b$} at 165 75
\pinlabel {\color{lightBrown} $\ceiling{\mu}$} at 165 35

\pinlabel {\color{darkPeach} $\lambda = -sf -qb$} at 75 205
\pinlabel {\color{lightPeach} $\mu = \phantom{-}rf + pb$} at 75 190
\pinlabel {\color{lightGreen} $m = \phantom{-}rf -pb$} at 73 175

\endlabellist
\includegraphics[scale=.95]{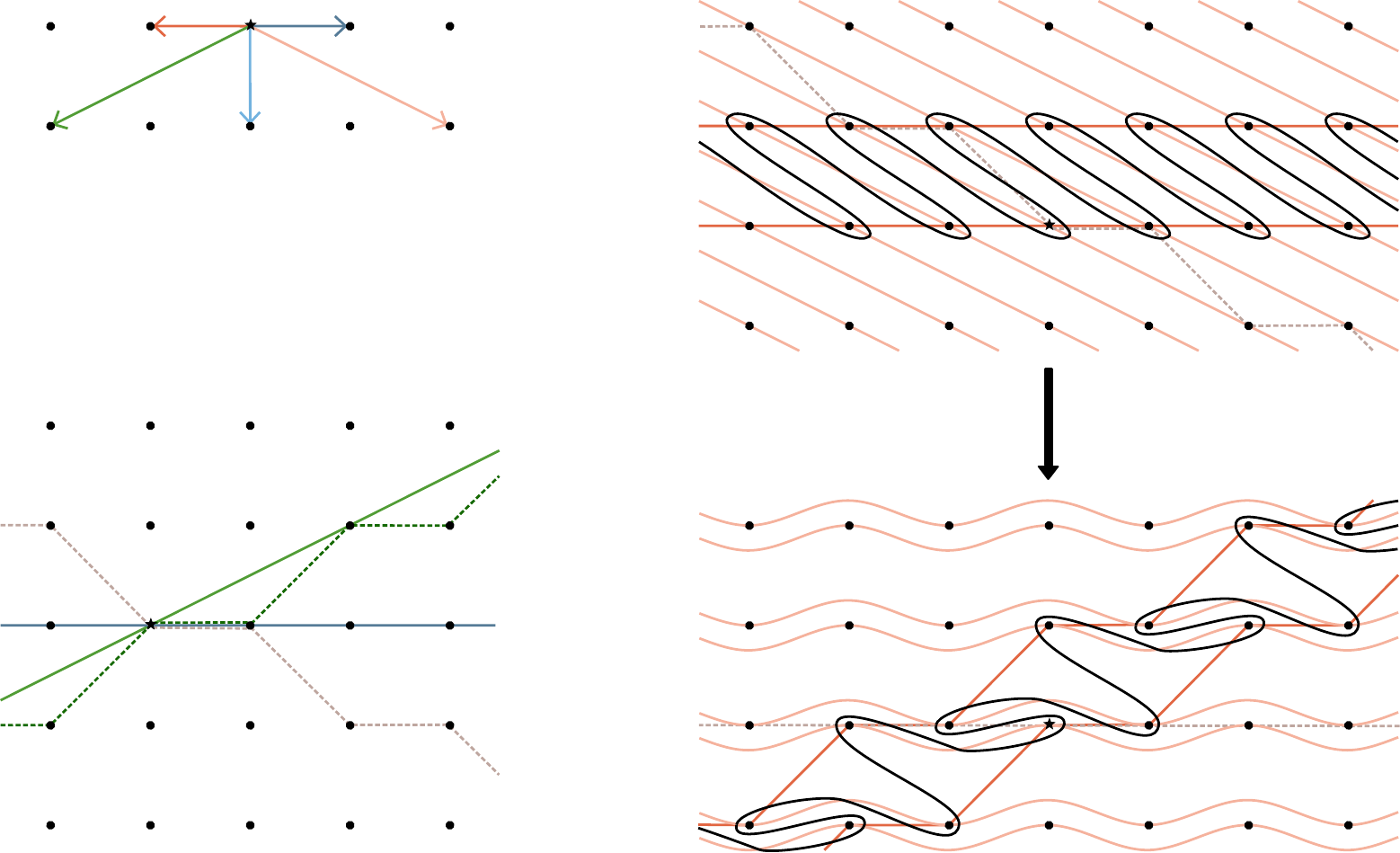}
\caption{The fractional plane shear in the vertical direction associated with computing a $(p,q)$-cable, viewed with respect to the $(-f, b)$ framing. Other relevant curves are shown, with respect to this framing, in the top left. The bottom left shows a copy of $m$ through the origin and the corresponding curve $\floor{m}$ obtained by dropping down to the highest peg below $m$ in each column. This curve serves as a ``key" for the plane shear--that is, we we shift each column of pegs upward by the height of $\floor{m}$ in that column. Thus the plane shear is determined by the fact that it takes $b$ to $\floor{m}$, or equivalently that it takes $\ceiling{\mu}$ to $b$. The right shows the effect of this shear on the curve for the right handed trefoil. For the concrete example in the figure, $(p,q) = (2,1)$ and $(r,s) = (1,0)$.}\label{fig:proof-merge-framing}
\end{figure}

While the previous paragraph gives a complete procedure for computing $\HFhat(M_{p,q})$, performing the change of basis to draw the curve-set $\HFhat(M)$ with respect to the $(f, b)$ framing can be cumbersome. Instead, we can follow the same operation but view the plane with respect to $(\mu, \lambda)$, the preferred framing for $\partial M$, throughout the process. Now, instead of shifting pegs in each vertical column, we shift along lines parallel to the fiber direction; since $f_2 = p\lambda + q\mu$, this is a line of slope $\tfrac q p$. To keep track of how much to shift along each line of slope $\tfrac q p$ we can draw a copy of the piecewise linear curve $\ceiling{\mu}$; note that this is obtained from a vertical line $\mu$ through the origin by pushing each point $(0, \tfrac n p)$ on $\mu$ leftward along a line of slope $\tfrac q p$ to the first lattice point it encounters (see Figure \ref{fig:proof-knot-framing}, top left). To perform the cable operation, we shear along lines of slope $\tfrac q p$ to bring this curve $\ceiling{\mu}$ to $b$ (see Figure \ref{fig:proof-knot-framing}, bottom left). 

\begin{figure}[t]
\labellist 
\pinlabel {\color{darkBlue} $b$} at 190 290
\pinlabel {\color{lightBlue} $f$} at 260 315

\pinlabel {\color{lightPeach} $\mu$} at 203 315
\pinlabel {\color{darkPeach} $\lambda$} at 230 290

\pinlabel {\color{darkBlue} $b$} at  198 185

\pinlabel {\color{lightBrown} $\ceiling{\mu}$} at 95 265
\pinlabel {\color{lightPeach} $\mu$} at 80 263

\pinlabel {\color{darkBlue} $b = -r\lambda -s\mu$} at 90 310
\pinlabel {\color{lightBlue} $f = \phantom{-}p\lambda + q\mu$} at 330 310

\endlabellist
\includegraphics[scale=1]{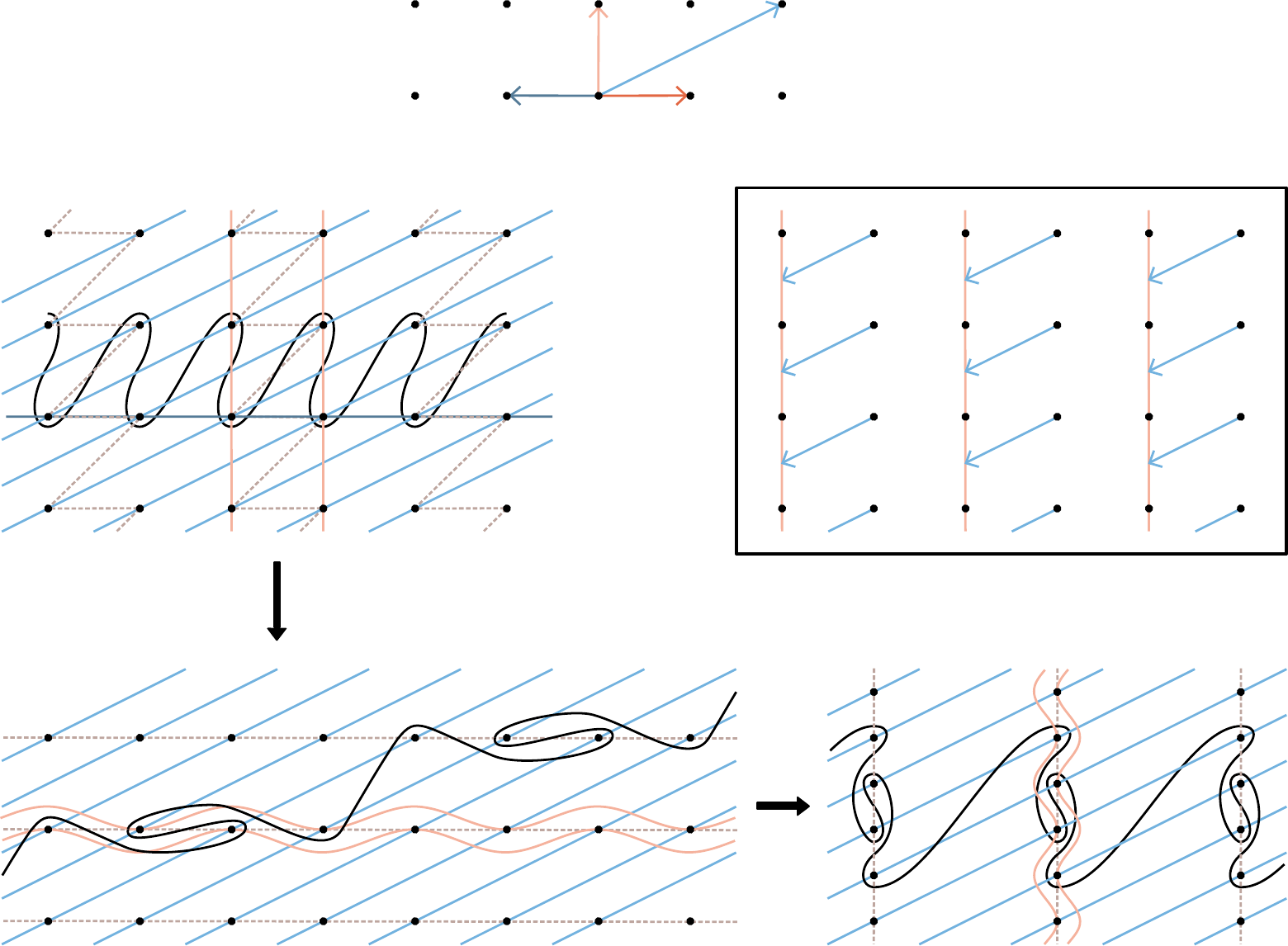}
\caption{Starting with the curve $\HFhat(M)$ drawn in the plane with respect to the standard $(\mu, \lambda)$ framing, the fractional plane shear in the $f$ direction which takes $\ceiling{\mu}$ to $b$ produces the curve $\HFhat(M_{p,q})$, though not in terms of a convenient parametrization. shearing back partially along $f$ gives the curve with the standard parametrization, up to rescaling the lattice. These two steps can be combined into one, as shown in the box: each lattice point is translated leftward along lines of slope $\tfrac qp$ until its $x$ coordinate is a multiple of $p$. Note that $p$ copies of the curve $\HFhat(M)$ are involved in each copy of $\HFhat(M_{p,q})$. The figure shows the case of of the $(2,1)$ cable of the right handed trefoil.}\label{fig:proof-knot-framing}
\end{figure}

We can now start with the curve $\HFhat(M)$ represented in terms of its standard framing $(\mu, \lambda)$ and produce the immersed curve $\HFhat(M_{p,q})$ in one simple step. However, as before, the output is not given with respect to the standard framing by $(\mu_C, \lambda_C)$. Of course, it is straightforward to determine the slopes of $\mu_C$ and $\lambda_C$ in the output picture and then we simply need to change basis applying a linear map to the plane which takes these to the vertical and horizontal directions, respectively. This can always be accomplished by a sequence of (integral) plane shears in the horizontal and vertical directions. However, this too is cumbersome, so we will describe a shortcut to this reparametrization making use of a linear transformation of the plane which does not preserve the lattice. More precisely, consider the linear transformation which fixes $f$ and takes $b$ to $\mu$; this can be understood as translating each lattice point on $b$ along a line of slope $\tfrac  q p$ until it reaches the vertical line $\mu$ (see Figure \ref{fig:proof-knot-framing}, bottom right). Note that the lattice $\Z^2$ is not mapped to itself under this transformation, but rather its image is $p\Z \times \tfrac 1 p \Z$. Even so, in this new deformed lattice the directions corresponding to $\mu_C$ and $\lambda_C$ are vertical and horizontal, as desired, and we can recover the usual lattice by ending with another linear transformation which scales and compresses by a factor of $p$ in the vertical direction and horizontal direction, respectively.

Finally, we mention that there are now two steps which involve shearing along the lines of slope $\tfrac q p$: the fractional plane shear taking $\ceiling{\mu}$ to $b$ (this transformation is not linear), and the linear transformation taking $b$ to $\mu$. These steps can be combined in one by shearing along lines of slope $\tfrac q p$ to push $\ceiling{\mu}$ onto $\mu$. In other words, every $p$th vertical column of lattice points is fixed, while all other points are pushed leftward along lines of slope $\tfrac q p$ until they reach a vertical line containing one of the fixed columns (see boxed portion of Figure \ref{fig:proof-knot-framing}). This proves Theorem \ref{thm:main-cable}.


\bibliographystyle{plain}
\bibliography{references/bibliography}

\begin{thebibliography}{10}

\bibitem{DHST}
Irving Dai, Jennifer Hom, Matthew Stoffregen, and Linh Truong.
\newblock {More concordance homomorphisms from knot Floer homology}.
\newblock Preprint, arXiv:1902.03333.

\bibitem{Freedman1982}
Michael~Hartley Freedman.
\newblock The topology of four-dimensional manifolds.
\newblock {\em J. Differential Geom.}, 17(3):357--453, 1982.

\bibitem{Hanselman2016}
Jonathan Hanselman.
\newblock Bordered {H}eegaard {F}loer homology and graph manifolds.
\newblock {\em Algebr. Geom. Topol.}, 16(6):3103--3166, 2016.

\bibitem{HRW}
Jonathan Hanselman, Jacob Rasmussen, and Liam Watson.
\newblock Bordered {F}loer homology for manifolds with torus boundary via
  immersed curves.
\newblock Preprint, arXiv:1604.03466.

\bibitem{HRW-companion}
Jonathan Hanselman, Jacob Rasmussen, and Liam Watson.
\newblock Heegaard {F}loer homology for manifolds with torus boundary:
  properties and examples.
\newblock Preprint, arXiv.1810.10355.

\bibitem{HW}
Jonathan Hanselman and Liam Watson.
\newblock {A calculus for bordered Floer homology}.
\newblock Preprint, arXiv.1508.05445.

\bibitem{Hedden-thesis}
Matthew Hedden.
\newblock {\em On knot {F}loer homology and cabling}.
\newblock ProQuest LLC, Ann Arbor, MI, 2005.
\newblock Thesis (Ph.D.)--Columbia University.

\bibitem{Hedden2005}
Matthew Hedden.
\newblock On knot {F}loer homology and cabling.
\newblock {\em Algebr. Geom. Topol.}, 5:1197--1222, 2005.

\bibitem{Hedden2009}
Matthew Hedden.
\newblock On knot {F}loer homology and cabling. {II}.
\newblock {\em Int. Math. Res. Not. IMRN}, (12):2248--2274, 2009.

\bibitem{Hom2011}
Jennifer Hom.
\newblock A note on cabling and {$L$}-space surgeries.
\newblock {\em Algebr. Geom. Topol.}, 11(1):219--223, 2011.

\bibitem{Hom2014}
Jennifer Hom.
\newblock Bordered {H}eegaard {F}loer homology and the tau-invariant of cable
  knots.
\newblock {\em J. Topol.}, 7(2):287--326, 2014.

\bibitem{Hom-survey}
Jennifer Hom.
\newblock A survey on {H}eegaard {F}loer homology and concordance.
\newblock {\em J. Knot Theory Ramifications}, 26(2):1740015, 24, 2017.

\bibitem{Levine2012}
Adam~Simon Levine.
\newblock Knot doubling operators and bordered {H}eegaard {F}loer homology.
\newblock {\em J. Topol.}, 5(3):651--712, 2012.

\bibitem{LOT-bimodules}
Robert Lipshitz, Peter~S. Ozsv{\'a}th, and Dylan~P. Thurston.
\newblock Bimodules in bordered {H}eegaard {F}loer homology.
\newblock {\em Geom. Topol.}, 19(2):525--724, 2015.

\bibitem{LOT}
Robert Lipshitz, Peter~S. Ozsvath, and Dylan~P. Thurston.
\newblock Bordered {H}eegaard {F}loer homology.
\newblock {\em Mem. Amer. Math. Soc.}, 254(1216):viii+279, 2018.

\bibitem{Mosher2003}
Lee Mosher.
\newblock What is a train track?
\newblock {\em Notices of the AMS}, 50(3):354--355, 2003.

\bibitem{OSz2004-knot}
Peter Ozsv{\'a}th and Zolt{\'a}n Szab{\'o}.
\newblock Holomorphic disks and knot invariants.
\newblock {\em Adv. Math.}, 186(1):58--116, 2004.

\bibitem{OSS2017}
Peter~S. Ozsv\'{a}th, Andr\'{a}s~I. Stipsicz, and Zolt\'{a}n Szab\'{o}.
\newblock Concordance homomorphisms from knot {F}loer homology.
\newblock {\em Adv. Math.}, 315:366--426, 2017.

\bibitem{Petkova2013}
Ina Petkova.
\newblock Cables of thin knots and bordered {H}eegaard {F}loer homology.
\newblock {\em Quantum Topol.}, 4(4):377--409, 2013.

\bibitem{Rasmussen-thesis}
Jacob~Andrew Rasmussen.
\newblock {\em Floer homology and knot complements}.
\newblock ProQuest LLC, Ann Arbor, MI, 2003.
\newblock Thesis (Ph.D.)--Harvard University.

\bibitem{VanCott2010}
Cornelia~A. Van~Cott.
\newblock Ozsv\'{a}th-{S}zab\'{o} and {R}asmussen invariants of cable knots.
\newblock {\em Algebr. Geom. Topol.}, 10(2):825--836, 2010.

\end{thebibliography}

\end{document}